\let\ORIlabel\label
\let\ORIrefstepcounter\refstepcounter
\AddToHook{package/hyperref/before}{
    \let\label\ORIlabel
    \let\refstepcounter\ORIrefstepcounter
}

\documentclass[onefignum,onetabnum,pagebackref]{siamart220329}

\RemoveFromHook{label}[firstaid/cleveref]

\usepackage{lipsum}
\usepackage{amsfonts}
\usepackage{graphicx}
\usepackage{epstopdf}
\usepackage{tikz}
\usetikzlibrary{positioning}
\ifpdf
  \DeclareGraphicsExtensions{.eps,.pdf,.png,.jpg}
\else
  \DeclareGraphicsExtensions{.eps}
\fi

\usepackage{enumitem}
\setlist[enumerate]{leftmargin=.5in}
\setlist[itemize]{leftmargin=.5in}

\usepackage{amsmath, amssymb, mathtools, stmaryrd}
\usepackage{amsfonts,mathrsfs, mleftright,booktabs,soul}
\usepackage{subcaption}
\usepackage{xspace}
\usepackage{algorithm, algpseudocode, algorithmicx}
\usepackage{backref}

\DeclareFontFamily{U}{mathx}{\hyphenchar\font45}
\DeclareFontShape{U}{mathx}{m}{n}{
      <5> <6> <7> <8> <9> <10>
      <10.95> <12> <14.4> <17.28> <20.74> <24.88>
      mathx10
      }{}
\DeclareSymbolFont{mathx}{U}{mathx}{m}{n}
\DeclareFontSubstitution{U}{mathx}{m}{n}
\DeclareMathAccent{\widecheck}{0}{mathx}{"71}

\newsiamthm{fact}{Fact}
\newsiamthm{question}{Question}
\newsiamthm{remark}{Remark}

\headers{Randomized Jacobi-Davidson}{L. Grigori, T. Park, and I. Simunec}

\title{Randomized Jacobi-Davidson method\thanks{Date: \today
\funding{TP is supported by the RandESC project, funded by the Swiss Platform for Advanced Scientific Computing (PASC).}}}

\author{Laura Grigori\thanks{Institute of Mathematics, EPF Lausanne, 1015 Lausanne, and PSI Center for Scientific Computing, Theory and Data, PSI, 5232 Villigen, Switzerland, (\email{laura.grigori@epfl.ch})} \and Taejun Park\thanks{Institute of Mathematics, EPF Lausanne, 1015 Lausanne, Switzerland, (\email{taejun.park@epfl.ch}, \email{igor.simunec@epfl.ch})} \and Igor Simunec\footnotemark[3]}

\usepackage{amsopn}

\renewcommand*{\backref}[1]{}
\renewcommand*{\backrefalt}[4]{%
	\ifcase #1 %
	(No citations.)
	\or
	(Cited on page #2.)
	\else
	(Cited on pages #2.)
	\fi
}

\usepackage[dvipsnames]{xcolor}

\algrenewcommand\algorithmiccomment[1]{\hfill\textcolor{MidnightBlue}{$\triangleright$ #1}}

\newcommand{\C}{\mathbb{C}}
\newcommand{\abs}[1]{\lvert#1\rvert}
\newcommand{\ip}[2]{\langle#1,\,#2\rangle}

\newcommand{\sep}{\operatorname{sep}}
\newcommand{\Pp}{\mathcal{P}}
\newcommand{\Vv}{\mathcal{V}}
\newcommand{\col}{\operatorname{col}}

\renewcommand{\c}[1]{\widecheck{#1}}
\newcommand{\range}{\operatorname{range}}

\renewcommand{\left}{\mleft}
  \renewcommand{\right}{\mright}

\DeclareMathOperator{\diag}{diag}

\renewcommand{\vec}[1]{\boldsymbol{#1}}

\DeclarePairedDelimiter{\norm}{\lVert}{\rVert}
\DeclareMathOperator{\spn}{span}

\newcommand{\order}{\mathcal{O}}

\DeclareMathOperator*{\argmin}{argmin}

\renewcommand{\hat}[1]{\widehat{#1}}

\newcommand{\gap}{\mathrm{gap}}

\usepackage[sort]{cite}

\begin{document}
	
\maketitle

\begin{abstract}
The Jacobi-Davidson method is a widely used subspace method for computing a few eigenpairs of a large, sparse, non-Hermitian matrix closest to a target. Like other subspace methods, it orthogonalizes each new expansion vector against the whole search basis, at a cost that grows quadratically with the subspace dimension and, in a distributed setting, requires a global synchronization at every iteration. We introduce a randomized Jacobi-Davidson method that replaces this orthogonalization with a much cheaper randomized orthogonalization process, which requires no inner products involving full-dimensional vectors. We prove that, under a uniform separation condition, the randomized method retains the local quadratic convergence of classical Jacobi-Davidson for non-Hermitian problems. The key step is showing that an exact solution of the randomized correction equation is one step of sketched inverse iteration for an arbitrary shift, which lets us extend the analysis to harmonic and refined variants of the sketched extraction, better suited to interior eigenvalues. Numerical experiments on real and synthetic non-Hermitian eigenvalue problems confirm the predicted convergence order and show that the randomized method matches the reliability of classical Jacobi-Davidson while reducing orthogonalization cost.
\end{abstract}
	
\begin{keywords}
Jacobi-Davidson method, non-Hermitian eigenvalue problem, randomized linear algebra, sketching
\end{keywords}
	
\begin{MSCcodes}
65F15, 68W20
\end{MSCcodes}

\section{Introduction}

We consider the problem of computing a few eigenvalues of a large sparse matrix~$A\in\C^{n\times n}$ that lie closest to a prescribed target~$\tau\in\C$, together with the corresponding eigenvectors. Large non-Hermitian eigenvalue problems arise throughout computational sciences including fluid stability~\cite{CliffeSpenceTavener00}, control and model reduction~\cite[Ch.~4]{Antoulas05}, and Markov-chain analysis~\cite[Ch.~1]{Stewart94}. When $n$ is very large, a complete spectral decomposition is typically unfeasible, and the methods of choice are usually \emph{projection methods}, which build an approximation subspace of small dimension by accessing the matrix~$A$ only through matrix-vector products, possibly with a preconditioner; see, for instance, \cite{Saad11}. In this work, we are primarily interested in the non-Hermitian case, which presents more difficulties than the Hermitian one: in general, the eigenvectors are not orthogonal and the associated eigenvalues may be severely ill-conditioned, and an approximation subspace that contains a good approximation to the target eigenvector does not guarantee that the extracted approximate eigenvector is accurate~\cite{JiaStewart01}. This last point is what motivates the harmonic~\cite{Morgan91,MorganZeng98} and refined~\cite{Jia97,JiaStewart01} extractions, which we study in \Cref{sec:harmonicrefined}.

Here we focus on the Jacobi-Davidson method~\cite{SleijpenVanderVorst96}, which combines Davidson's method~\cite{Davidson75} with an orthogonal correction due to Jacobi. Given the current Ritz pair~$(\theta,\vec u)$ and the associated residual, the approximation subspace is expanded by solving a correction equation in which $A-\theta I$ is restricted to the orthogonal complement of~$\vec u$. Under a uniform separation condition, the method converges quadratically for non-Hermitian~$A$ and cubically for Hermitian~$A$~\cite{SleijpenVanderVorst96}. The method has been extended in many directions, including generalized and nonlinear eigenvalue problems; see, for instance, \cite{SleijpenBootenFokkemaVanderVorst96,HochstenbachNotay06} and the references therein.

Similarly to other subspace methods that keep a full basis of the approximation subspace, the Jacobi-Davidson method orthogonalizes each new expansion vector against all the vectors computed so far. After~$m$ iterations, this orthogonalization has a cost of $\order(nm^2)$ floating point operations, which becomes dominant as the subspace dimension $m$ grows, especially if the matrix-vector products with~$A$ are cheap, such as for sparse matrices. On modern architectures, the computational bottleneck is not the number of arithmetic operations, but rather the amount of data movement; more precisely, in a distributed memory setting, each inner product between two vectors requires a global synchronization between processors, which becomes the dominant cost as the number of processors grows. A standard remedy is to periodically restart the method, bounding the subspace dimension by~$m_{\max}$, but this may discard important information and typically slows down convergence.

Randomized numerical linear algebra offers another way to reduce this cost; see~\cite{HalkoMartinssonTropp11,MartinssonTropp20} for an overview. In this context, the main tool is a \emph{subspace embedding}~\cite{Sarlos06}, a matrix $S\in\C^{s\times n}$ with $s\ll n$ that approximately preserves the norm of vectors in a given low-dimensional subspace; such a sketching matrix $S$ can be drawn at random to have this property with high probability for any fixed subspace of a given dimension~\cite{AilonChazelle09,Cohen16,NelsonNguyen13}. 
The randomized Gram--Schmidt process, introduced in~\cite{BalabanovGrigori22} in the context of GMRES, computes a basis that is orthonormal with respect to the sketched inner product $\ip{\vec x}{\vec y}_S := \ip{S\vec x}{S\vec y}$, by computing inner products on vectors of length~$s \ll n$, which are significantly cheaper and involve less communication than inner products of vectors of length $n$. Even though a basis obtained in this way is not orthonormal, it is well-conditioned and it can be reliably used in a (suitably modified) projection method. Randomized orthogonalization has been used successfully for many problems; see \cite{DGST25} for a recent survey.

In this work we propose to use randomized sketching within the Jacobi-Davidson method. We introduce a randomized Jacobi-Davidson method, in which the basis of the search subspace is kept sketch-orthonormal instead of orthonormal, the approximate eigenpairs are extracted from a Galerkin condition in the sketched inner product, and the correction equation is modified by replacing the orthogonal projector onto the complement of the current Ritz vector with the corresponding sketch-orthogonal projector. The resulting algorithm has essentially the same cost per iteration as the original Jacobi-Davidson method, except for the orthogonalization, which is now substantially cheaper. Our analysis shows that the randomized Jacobi-Davidson method still converges quadratically on non-Hermitian problems.

\paragraph{Contributions} Our main contributions are the following:
\begin{itemize}
	\item We introduce the randomized Jacobi-Davidson method (\Cref{alg:rjd}) in \Cref{sec:rjd}, and prove in \cref{thm:convRJD} that it has local quadratic convergence for general non-Hermitian problems, under a uniform separation condition. The randomized basis update requires no length-$n$ inner products: in one iteration at subspace dimension $m$, the classical method performs $12nm$ operations, or $16nm$ if the orthogonalization is repeated for stability, against $6nm + 3T_S$ for the randomized method, where $T_S$ is the cost of sketching a vector and is independent of $m$.
    \item We show in \Cref{thm:ceequalsinviter} that an exact solve of the randomized correction equation is one step of \emph{sketched inverse iteration}, for an arbitrary shift. This is the result that drives the convergence proof and it is what allows the harmonic and refined variants to inherit the analysis unchanged.
	\item We develop harmonic and refined variants of the sketched extraction in \Cref{sec:harmonicrefined}, which are better suited for the approximation of interior eigenvalues. The refined variant can be used to remove the uniform separation condition.
    \item We show in \Cref{remark:normalcase} that for normal $A$ the method converges quadratically, and cubically if the shift is taken to be the standard Rayleigh quotient of the sketched Ritz vector. We nevertheless argue that \Cref{alg:rjd} is not the right tool for Hermitian problems, because sketching destroys the symmetry of the projected problem, resulting in increased cost, whereas the non-Hermitian case is where sketching is free in terms of order of convergence.
\end{itemize}

\paragraph{Related work}
The Jacobi-Davidson method and its variants are the subject of a large literature. Davidson's original method~\cite{Davidson75} was generalized to non-diagonal preconditioners by Morgan and Scott~\cite{MorganScott86}, and the Jacobi-Davidson method~\cite{SleijpenVanderVorst96} replaced the Davidson correction by the projected Newton update~\cite{FokkemaSleijpenVanderVorst98}. Deflated and partial Schur formulations for computing several eigenpairs (JDQR/JDQZ) were developed in \cite{Fokkema1998}, extensions to generalized and polynomial eigenvalue problems in \cite{SleijpenBootenFokkemaVanderVorst96}, and preconditioned and inexact solves of the correction equation in \cite{HochstenbachNotay09}. A mature implementation of these techniques, including thick restarting and several extraction strategies, is available in PRIMME~\cite{PRIMME}. Our contribution is orthogonal to these developments: sketching affects only the inner products used by the method, while the deflation, restarting, and preconditioning strategies remain unchanged.

On the randomized side, sketching has been used to cheaply build well-conditioned bases through randomized Gram-Schmidt~\cite{BalabanovGrigori22} or randomized Householder QR~\cite{doi:10.1137/24M1674327} for various tasks such as for the solution of linear systems and eigenvalue problems~\cite{NakatsukasaTropp24,BalabanovGrigori22, Gubleretal26}, for evaluating matrix functions~\cite{GuttelSchweitzer23,PalittaSchweitzerSimoncini25}, and matrix equations~\cite{PalittaSchweitzerSimoncini24} and in model order reduction~\cite{BalabanovNouy19}. See~\cite{DGST25} for a recent survey. Closest to the present work is the sketched Rayleigh--Ritz procedure of \cite{NakatsukasaTropp24}, and the randomized implicitly restarted Arnoldi method~\cite{deDamasGrigori25}. Combining sketched extraction with Jacobi-Davidson is suggested in \cite{NakatsukasaTropp24}, but is neither developed nor analyzed there. Randomization has
further been used to stabilize the Rayleigh--Ritz extraction itself~\cite{Shao26}, which is complementary to our approach, and the block randomized Gram--Schmidt process of \cite{BalabanovGrigori25} applies when several vectors are added to the subspace at once.

The rest of the paper is organized as follows. In \Cref{sec:preliminaries} we recall the Jacobi-Davidson method and randomized sketching. In \Cref{sec:rjd} we introduce the randomized Jacobi-Davidson method, derive the randomized correction equation, and discuss the computational cost. \Cref{sec:analysis} contains the convergence analysis, which is the main contribution of this work. Harmonic and refined variants of the sketched extraction are developed in \Cref{sec:harmonicrefined}, and \Cref{sec:experiments} reports our numerical experiments. Finally, \Cref{sec:conclusions} contains some concluding remarks.

\paragraph{Notation} 
Throughout, bold lowercase letters denote vectors, $\norm{\cdot}$ denotes the Euclidean norm of a vector and the spectral norm of a matrix, $\ip{\vec x}{\vec y} = \vec x^*\vec y$ the Euclidean inner product, $A^*$ the conjugate transpose and $A^\dagger$ the Moore--Penrose pseudoinverse of $A$. We write $I$ for the identity matrix, $\sigma_{\min}(\cdot)$ and $\sigma_{\max}(\cdot)$ for the smallest and largest singular value, and $\kappa_2(\cdot) = \sigma_{\max}(\cdot)/\sigma_{\min}(\cdot)$ for the spectral condition number. The spectrum of $A$ is denoted by $\lambda(A)$, and for $\mu \in \C$ and $B$ square we denote their separation by $\sep(\mu, B) = \sigma_{\min}(\mu I - B)$. For a matrix $V$ we denote its column space by $\spn(V) = \range(V)$. Given a sketching matrix $S \in \C^{s\times n}$, we write $\ip{\vec x}{\vec y}_S := \ip{S\vec x}{S\vec y}$ for the sketched inner product, $\norm{\vec x}_S := \norm{S\vec x}$ for the induced (semi-)norm, and $\vec x \perp_S \vec y$ for $\ip{\vec x}{\vec y}_S = 0$. A matrix $V$ is sketch-orthonormal (or $S$-orthonormal) if $(SV)^*(SV) = I$. Canonical angles are denoted by $\angle(\cdot,\cdot)$ and are defined in \Cref{def:angles}. Quantities carrying a check, such as $\c{V}_m$ and $\c{\theta}_m$ refer to the classical (non-randomized) Jacobi-Davidson method. Unless stated otherwise, $(\lambda, \vec x_R, \vec x_L)$ is the target eigentriplet.

\section{Preliminaries} 
\label{sec:preliminaries}
This section develops the background needed to describe the randomized Jacobi-Davidson method in \Cref{sec:rjd}. We review the original Jacobi-Davidson method, followed by sketching, a fundamental tool in randomized numerical linear algebra.

\subsection{Jacobi-Davidson method}
The Jacobi-Davidson method introduced by Sleijpen and Van der Vorst \cite{SleijpenVanderVorst96} is a subspace iteration that builds an approximation subspace $\Vv_m = \col(\c{V}_m)$ for the right eigenvector $\vec x_R$ of $A$ associated with the eigenvalue $\lambda$ closest to a target $\tau \in \C$. In each iteration, the Jacobi-Davidson method performs the following four steps: (i) extracts a Ritz pair $(\c{\theta}_m, \c{\vec u}_m = \c{V}_m \c{\vec y}_m)$ from the standard Galerkin condition $\c{V}_m^*(A-\c{\theta}_m I)\c{V}_m \c{\vec y}_m = 0$ on the current subspace; (ii) tests the residual $\c{\vec r}_m = A \c{\vec u}_m - \c{\theta}_m \c{\vec u}_m$ for convergence; (iii) solves the correction equation,
\begin{equation} \label{eq:ce}
    (I-\c{\vec u}_m \c{\vec u}_m^*)(A-\c{\theta}_m I)(I - \c{\vec u}_m \c{\vec u}_m^*)\c{\vec t}_m = -\c{\vec r}_m, \quad \c{\vec t}_m \perp \c{\vec u}_m,
\end{equation} which is the projected Newton update at the current Ritz pair \cite{FokkemaSleijpenVanderVorst98}; and (iv) appends the correction $\c{\vec t}_m$ to the search subspace $\c{V}_m$ via orthogonalization. Under a uniform separation condition, the local convergence is typically quadratic for non-Hermitian $A$ and cubic for Hermitian $A$ \cite{SleijpenVanderVorst96,HochstenbachSleijpen03}. The method is summarized in \Cref{alg:jd}.

The randomized variant described in \Cref{sec:rjd} replaces the standard Galerkin projection $\c{V}_m^*A \c{V}_m$ with a \emph{sketched} Galerkin projection $(SV_m)^*(SAV_m)$ for a fixed sketching matrix $S$ (see \Cref{subsec:sketching}), Euclidean orthogonality by $S$-orthogonality, and modifies the correction equation accordingly. As a result, the orthogonalization and inner products become much more efficient.

\begin{algorithm}[t]
\caption{Jacobi--Davidson method~\cite{SleijpenVanderVorst96}}
\label{alg:jd}
\begin{algorithmic}[1]
\Require Matrix $A\in\C^{n\times n}$; initial orthonormal $V_0\in\C^{n\times m_0}$;
  target $\tau\in\C$; tolerance $\mathtt{tol}>0$; restart dimensions $m_{\min}<m_{\max}$.
\Ensure Approximate eigenpair $(\theta, \vec u)$ with $\|\vec u\|=1$ and
  $\|A \vec u - \theta \vec u\|\leq \mathtt{tol}$.
\State $m\gets m_0$, $V \gets V_0$, $W \gets AV_0$, $H\gets V^*W$ 
\Repeat
  \State Solve $H \vec y = \theta \vec y$ and select the eigenpair closest to $\tau$
  \State Normalize $\vec y \gets \vec y/\norm{\vec y}$ and set $\vec u \gets V \vec y$.
  \State Compute residual: $\vec r \gets W \vec y - \theta \vec u$
  \If{$\|\vec r\|\le \mathtt{tol}$}
  \State Accept $(\theta, \vec u)$ and return
  \EndIf
  \State Solve (approximately) the correction
    equation~\eqref{eq:ce} for $\vec t\perp \vec u$
  \State Orthonormalize: $\hat{\vec t} \gets \vec t - V(V^* \vec t)$, $\hat{\vec t} \gets \hat{\vec t} /\norm{\hat{\vec t} }$
  \State $\vec w \gets A \hat{\vec t}$ \Comment{1 matvec}
  \State Update Gram matrix: $H\gets \begin{bmatrix}
      H & V^*\vec w \\ \hat{\vec t}^*W & \hat{\vec t}^* \vec w
  \end{bmatrix}$
  \State Expand: $V \gets [V,\,\hat{\vec t}]$, $W \gets [W,\, \vec w]$
  \State $m\gets m+1$,
  \If{$m \geq m_{\max}$}
  \State Restart: retain the $m_{\min}$ best Ritz vectors and $m \gets m_{\min}$
  \EndIf
\Until{convergence}
\end{algorithmic}
\end{algorithm}

\subsection{Sketching} \label{subsec:sketching}
Let $\Vv \subset \C^n$ be a fixed subspace of dimension $m$. Then a matrix $S \in \C^{s\times n}$ is a subspace embedding \cite{Sarlos06} for $\Vv$ with distortion $\epsilon \in (0,1)$ if 
\begin{equation} \label{eq:subemb}
    (1-\epsilon)\norm{\vec u}^2 \leq \norm{S\vec u}^2 \leq (1+\epsilon) \norm{\vec u}^2
\end{equation} for all $\vec u \in \Vv$. A subspace embedding reduces the dimension of the space $\C^n$, while preserving the norms of all vectors in the subspace $\Vv$ up to a small distortion factor $(1\pm \epsilon)$. Here, $s$ is called the sketch size and typically, $m < s \ll n$.

In many applications, we do not know the subspace $\Vv$ in advance and hence constructing a subspace embedding can be difficult. Therefore we construct an embedding that satisfies \eqref{eq:subemb} with high probability for any $m$-dimensional subspace in $\C^n$. We refer to these embeddings as \emph{oblivious subspace embeddings}. There are a few important examples of oblivious subspace embeddings which include Gaussian embeddings \cite{DavidsonSzarek01,HalkoMartinssonTropp11}, subsampled randomized trigonometric transforms (SRTTs) \cite{AilonChazelle09,Tropp11}, and sparse embeddings \cite{NelsonNguyen13,Cohen16,cdd26}. These oblivious subspace embeddings have different conditions on $s$ and failure probability for satisfying \eqref{eq:subemb}; see \cite{MartinssonTropp20} and references therein.
In this work, we use a \texttt{sparsestack} embedding~\cite[Definition~1.7]{CEMT25}, which consists of $\zeta = 4$ independent CountSketch matrices stacked vertically, so that $S$ has a total of $\zeta = 4$ nonzeros per column.

Subspace embeddings also approximately preserve the inner products between vectors in the subspace $\Vv$. Let $\ip{\vec u}{\vec v}_S:= \ip{S \vec u}{S \vec v}$ be the sketched inner product, then by \eqref{eq:subemb} and the polarization identity,
\begin{equation} \label{eq:innerproduct}
    |\ip{\vec u}{\vec v}_S - \ip{\vec u}{\vec v}| \leq \epsilon \norm{\vec u}\norm{\vec v},
\end{equation} for all $\vec u,\vec v \in \Vv$. We denote its induced norm restricted to $\Vv$\footnote{Note that without this restriction $\norm{\cdot}_S$ is only a semi-norm.} as $\norm{\vec u}_S := \norm{S \vec u}$. Using the sketched inner product and normalization under $\norm{\cdot}_S$, instead of keeping a basis orthonormal, we can keep it \emph{sketch-orthonormal}. We call a matrix $V$ sketch-orthonormal if $(SV)^*(SV) = I$. By \eqref{eq:subemb}, the singular values of $V$ all lie in $\left[(1+\epsilon)^{-1/2},(1-\epsilon)^{-1/2}\right]$, so $V$ is well-conditioned with its condition number at most $\sqrt{\frac{1+\epsilon}{1-\epsilon}}$. Many algorithms that look for an approximate solution to a linear system or eigenvalue problem in an approximation subspace $\mathcal{V}$, such as Krylov methods, can be modified to employ a well-conditioned sketch-orthonormal basis of $\mathcal{V}$ instead of an orthonormal one. The main advantage of this approach is that a sketch-orthonormal basis can be computed with reduced communication and computational cost using the randomized Gram--Schmidt process; see for instance \cite{BalabanovGrigori22,BalabanovGrigori25,DGST25,PalittaSchweitzerSimoncini25}.

In the context of Jacobi-Davidson, instead of keeping the basis of the search space orthonormal, we will keep it sketch-orthonormal. Therefore, we require $S$ to be a subspace embedding for the sum of the search and image spaces $\col([V_m, AV_m])$, of dimension at most $2m_{\max}$. This can be done using a single sketch by constructing a subspace embedding for subspaces of dimension $2m_{\max}$ at the start of the algorithm, e.g. by taking $s = 4 m_{\max}$. The details are given in \Cref{sec:rjd}. Note that this requirement is an assumption rather than a consequence of obliviousness, since the subspace $\range(V_m)$ produced by the algorithm depends on $S$ through $\theta$. The same difficulty is present in randomized Krylov methods that construct a subspace which depends on $S$, such as restarted Krylov methods \cite{deDamasGrigori25,deDamasGrigoriKS25, NakatsukasaTropp24}.

\section{Randomized Jacobi-Davidson method} 
\label{sec:rjd}
In this section we introduce the randomized Jacobi-Davidson method, which modifies the operations done using standard inner products to ones that use sketched inner products, reducing the orthogonalization cost in the Jacobi-Davidson method. Any orthogonality and normalizations are replaced by their sketched versions. Let ${V}$ be a sketch-orthonormal basis of an approximation subspace $\mathcal{V}\subset \C^n$. Then the algorithm uses \emph{sketched Galerkin projection} $\ip{{V}}{A{V}{\vec y} - {\theta}V{\vec y}}_S = 0$, which leads to the small eigenvalue problem $(S{V})^*(SA{V})\vec {y} = {\theta}{\vec y}$ and the associated approximate eigenpair $({\theta},{\vec u} = {V}{\vec y})$ for $A$. The correction uses the sketch-orthogonal projector $\Pi_{\vec u}^S = I - {\vec u}(S{\vec u})^*S$ in place of the usual orthogonal projector, which can be written as 
\begin{equation} \label{eq:rce}
    \Pi_{\vec u_m}^S(A-{\theta}_mI)\Pi_{\vec u_m}^S{\vec t}_m = -\Pi_{\vec u_m}^S{\vec r}_m, \quad {\vec t}_m \perp_S {\vec u}_m
\end{equation} where ${\vec r}_m = A{\vec u}_m - {\theta}_m{\vec u}_m$ and $\vec x\perp_S \vec y$ means $\ip{\vec x}{\vec y}_S = 0$. Note that $\Pi_{\vec u}^S = I - \vec u (S\vec u)^*S$ is indeed a projector whenever $\norm{\vec u}_S = 1$, and that it is the $S$-orthogonal projector onto the $S$-orthogonal complement of $\vec u$. When $\theta_m$ is the sketched Rayleigh quotient of $\vec u_m$ we have $(S\vec u_m)^*(S\vec r_m) = \theta_m - \theta_m = 0$, so $\Pi_{\vec u_m}^S \vec r_m = \vec r_m$ and \eqref{eq:rce} takes the familiar form with $-\vec r_m$ on the right-hand side. We keep the projector because \eqref{eq:rce} is also used with the harmonic shift in \Cref{sec:harmonicrefined} and with the standard Rayleigh quotient in \Cref{remark:normalcase}; the left-hand side of \eqref{eq:rce} always lies in $\range(\Pi_{\vec u_m}^S)$, so without the projector the equation would be inconsistent. The algorithm is presented in \Cref{alg:rjd}.

\paragraph{Stopping criteria} The natural quantity to monitor is the relative residual of the computed pair, $\norm{A\vec u - \theta \vec u}/\norm{\vec u}$. Since $\vec u = V\vec y \in \spn(V)$ and $S$ is an $\epsilon$-embedding for that subspace, $\norm{\vec u} \geq (1+\epsilon)^{-1/2}\norm{\vec u}_S = (1+\epsilon)^{-1/2}$ and hence $\norm{A\vec u - \theta \vec u}/\norm{\vec u} \leq \sqrt{1+\epsilon} \norm{\vec r}$. Accordingly, we accept the Ritz pair in \Cref{alg:rjd} when $\norm{\vec r} \leq (1+\epsilon)^{-1/2}\mathtt{tol}$. In practice, the distortion $\epsilon$ is unknown, in which case $\norm{\vec r} \leq 2^{-1/2}\mathtt{tol} (\leq (1+\epsilon)^{-1/2} \mathtt{tol})$ can be used instead.

\begin{algorithm}[t]
\caption{Randomized Jacobi--Davidson method}
\label{alg:rjd}
\begin{algorithmic}[1]
\Require Matrix $A\in\C^{n\times n}$; initial $V_0\in\C^{n\times m_0}$; target $\tau\in\C$; tolerance $\mathtt{tol}>0$; restart dimensions $m_{\min}<m_{\max}$.
\Ensure Approximate eigenpair $({\theta}, {\vec u})$ with $\|{\vec u}\|_S=1$ and
  $\|A{\vec u} - {\theta}{\vec u}\|\leq \mathtt{tol} \norm{\vec u}$.
\State Draw an oblivious subspace embedding $S \in \C^{s\times n}$ \Comment{recommended $s = 4m_{\max}$}
\State Sketch-orthonormalize: ${V} \gets \texttt{RGS}(V_0)$ \Comment{\cite[Alg.~2]{BalabanovGrigori22}}
\State $m\gets m_0$, $W \gets A{V}, Q \gets SV, B \gets SW, H\gets Q^*B$
\Repeat
  \State Solve ${H} {\vec y} = {\theta} {\vec y}$ and select the eigenpair closest to $\tau$
  \State Normalize ${\vec y}\gets {\vec y}/\norm{{\vec y}}$ and set ${\vec u} \gets {V} {\vec y}$.
  \State Compute residual: ${\vec r} \gets  W{\vec y} - \theta {\vec u}$
  \If{$\|{\vec r}\|\le (1+\epsilon)^{-1/2}\mathtt{tol}$}
  \State Accept $({\theta}, {\vec u})$ and return
  \EndIf
  \State Solve (approximately) the randomized correction
    equation \cref{eq:rce},
    \begin{equation*}
        (I - {\vec u}(S{\vec u})^*S)(A-{\theta}I)(I - {\vec u}(S{\vec u})^*S){\vec t} = - {\vec r}, \quad {\vec t} \perp_S{\vec u}.
    \end{equation*}
  \State Sketch-orthonormalize: $\hat{\vec t} \gets {\vec t} - {V}Q^*S\vec t$, $\widehat{\vec q} \gets S\widehat{\vec t}$, $\hat{\vec t} \gets \hat{\vec t} /\norm{\widehat{\vec q}}$, $\widehat{\vec q} \gets \widehat{\vec q} /\norm{\widehat{\vec q}}$
  \State $\vec w \gets A\hat {\vec t}$  \Comment{1 matvec}
  \State Sketch: $\vec b \gets S\vec w$
  \State Update Gram matrix: $H \gets \begin{bmatrix}
      H & Q^*\vec b \\ \widehat{\vec q}^*B & \widehat{\vec q}^* \vec b
  \end{bmatrix}$
  \State Expand: ${V} \gets [{V},\,\hat {\vec t}]$, $W \gets [W,\, \vec w]$, $Q\gets [Q,\,\widehat{\vec q}]$, $B \gets [B,\, \vec b]$
  \State $m\gets m+1$,
  \If{$m \geq m_{\max}$}
  \State Restart: retain the $m_{\min}$ best Ritz vectors and $m\gets m_{\min}$
  \EndIf
\Until{convergence}
\end{algorithmic}
\end{algorithm}

\subsection{Derivation of the randomized correction equation}
The randomized correction equation \eqref{eq:rce} is the natural randomized analogue of the classical Jacobi-Davidson correction equation that incorporates sketched orthogonality. Given the current sketched Ritz pair $({\theta},{\vec u})$ with $\norm{{\vec u}}_S = 1$ and residual ${\vec r} = A{\vec u} - {\theta}{\vec u}$, we seek a correction ${\vec t} \perp_S {\vec u}$ such that ${\vec u} +{\vec t}$ is closer to an eigenvector. When the correction equation is solved exactly, the resulting iterates coincide with those generated by inverse iteration with the shift $\theta$ and a sketched normalization. We refer to this method as \emph{sketched inverse iteration}. The result below holds for an arbitrary shift $\theta$, not only for the sketched Rayleigh quotient; this generality is used in \Cref{remark:normalcase} and \Cref{sec:harmonicrefined}, where $\theta$ is a standard Rayleigh quotient or a harmonic Ritz value, respectively.

\begin{theorem}[Sketched inverse iteration] \label{thm:ceequalsinviter}
    Let ${\vec u}$ be a vector such that $\norm{{\vec u}}_S = 1$, let $\theta \in \C$ with $\theta \notin \lambda(A)$, and assume $(S{\vec u})^*S(A-{\theta}I)^{-1}{\vec u} \neq 0$. Then there exists a unique solution ${\vec t} \perp_S {\vec u}$ of \eqref{eq:rce} that satisfies
    \begin{equation}
		\label{eq:correction-equation-update}
        {\vec u} + {\vec t} = {\alpha}(A-{\theta}I)^{-1}{\vec u},
    \end{equation} where ${\alpha} = ((S{\vec u})^*S(A-{\theta}I)^{-1}{\vec u})^{-1}$. In particular this applies to the sketched Rayleigh quotient $\theta = (S\vec u)^*(SA\vec u)$ used in \Cref{alg:rjd}.
\end{theorem}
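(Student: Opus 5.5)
We mimic the classical argument of Sleijpen and Van der Vorst, with the orthogonal projector $I-\vec u\vec u^*$ replaced by the oblique projector $\Pi := \Pi_{\vec u}^S = I - \vec u(S\vec u)^*S$. The plan has two parts. First we show that $\vec t := \alpha(A-\theta I)^{-1}\vec u - \vec u$ satisfies \eqref{eq:rce}. Then we show it is the only solution with $\vec t\perp_S\vec u$. Throughout we use the identities $\Pi\vec u = 0$, $\Pi^2=\Pi$ (valid since $\norm{\vec u}_S=1$), and the fact that $\vec x\perp_S\vec u$ iff $\Pi\vec x = \vec x$.

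\emph{Existence.} Set $\vec z := (A-\theta I)^{-1}\vec u$; this is well defined since $\theta\notin\lambda(A)$. Set $\alpha := ((S\vec u)^*S\vec z)^{-1}$; this is well defined by hypothesis. Define $\vec t := \alpha\vec z - \vec u$. Then
\[
(S\vec u)^*S\vec t = \alpha (S\vec u)^*S\vec z - \norm{\vec u}_S^2 = 1-1 = 0,
\]
so $\vec t\perp_S\vec u$ and hence $\Pi\vec t = \vec t$. Next,
\[
(A-\theta I)\vec t = \alpha\vec u - (A-\theta I)\vec u = \alpha \vec u - \vec r .
\]
Applying $\Pi$ and using $\Pi\vec u=0$ gives
\[
\Pi(A-\theta I)\Pi\vec t = \Pi(A-\theta I)\vec t = -\Pi\vec r,
\]
which is \eqref{eq:rce}. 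By construction $\vec u+\vec t = \alpha(A-\theta I)^{-1}\vec u$, so \eqref{eq:correction-equation-update} holds.

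\emph{Uniqueness.} Suppose $\vec t_1,\vec t_2\perp_S\vec u$ both solve \eqref{eq:rce}, and let $\vec d := \vec t_1-\vec t_2$. Then $\vec d\perp_S\vec u$, so $\Pi\vec d = \vec d$, and $\Pi(A-\theta I)\vec d = 0$. The latter means $(A-\theta I)\vec d = \beta\vec u$ with $\beta = (S\vec u)^*S(A-\theta I)\vec d$, since $\ker\Pi = \spn(\vec u)$. Hence $\vec d = \beta\vec z$. Applying the $S$-orthogonality condition gives
\[
0 = (S\vec u)^*S\vec d = \beta (S\vec u)^*S\vec z = \beta/\alpha .
\]
Therefore $\beta=0$, and so $\vec d=0$.

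The main point needing care is this uniqueness step. It requires identifying $\ker\Pi$ exactly as $\spn(\vec u)$, which follows from $\Pi\vec x = 0 \Rightarrow \vec x = \vec u\,(S\vec u)^*S\vec x$, and then invoking the nondegeneracy hypothesis $(S\vec u)^*S\vec z\neq 0$.

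The final sentence of the statement holds because no property of $\theta$ beyond $\theta\notin\lambda(A)$ was used. In particular, the argument covers the sketched Rayleigh quotient, for which moreover $\Pi\vec r = \vec r$, as noted after \eqref{eq:rce}.
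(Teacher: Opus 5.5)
Your proof is correct and is essentially the paper's argument: existence is checked by the same direct substitution, using $\Pi_{\vec u}^S\vec u = 0$ and $(S\vec u)^*S\vec t = 0$. For uniqueness, the paper expands the projector to show that every solution $\vec t\perp_S\vec u$ satisfies $(A-\theta I)(\vec u+\vec t)\in\spn(\vec u)$, and then fixes the scalar by applying $(S\vec u)^*S$; this is the same computation as your argument on the homogeneous difference $\vec t_1-\vec t_2$ using $\ker\Pi_{\vec u}^S=\spn(\vec u)$.
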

\begin{proof}
    The proof follows the deterministic case \cite{SleijpenVanderVorst96} with the sketch-orthogonal projector $\Pi_{\vec u}^S = I - {\vec u}(S{\vec u})^*S$. From \eqref{eq:rce} and ${\vec t}\perp_S {\vec u}$, which gives $\Pi_{\vec u}^S \vec t = \vec t$, we have 
    \begin{equation*}
        (A-\theta I)\vec t - \vec u (S\vec u)^*S(A-\theta I)\vec t = -(A-\theta I)\vec u + \vec u (S\vec u)^*S\vec r.
    \end{equation*}
    Setting $\alpha = (S\vec u)^*S(A-\theta I)\vec t +(S\vec u)^*S\vec r$, gives $(A-\theta I)(\vec u+\vec t) = \alpha \vec u$. To obtain the expression for $\alpha$, apply $(S{\vec u})^*S$ to both sides of \cref{eq:correction-equation-update} and use $(S\vec u)^*(S\vec u) = 1$ and $(S{\vec u})^*(S{\vec t}) = 0$. When $\theta$ is the sketched Rayleigh quotient, $(S\vec u)^*(S\vec r) = 0$ and $\alpha$ reduces to $(S\vec u)^*(SA\vec t)$, similarly to the deterministic case. To show existence, note that if $\vec t = \alpha(A-\theta I)^{-1}\vec u -\vec u$ such that $\vec t \perp_S \vec u$ then we have
    \begin{align*}
        \Pi_{\vec u}^S (A-\theta I) \Pi_{\vec u}^S \vec t =  \Pi_{\vec u}^S (A-\theta I)\vec t = \Pi_{\vec u}^S (A-\theta I) (\alpha(A-\theta I)^{-1}\vec u -\vec u) = -\Pi_{\vec u}^S  \vec r.
    \end{align*} Uniqueness of the solution to \eqref{eq:rce} follows directly from the expression for $\vec t$ in \eqref{eq:correction-equation-update}.
\end{proof}

\subsection{Computational cost}
We compare the computational cost of one iteration of \Cref{alg:jd} and \Cref{alg:rjd} at subspace dimension $m$, counting only the dominant terms. We exclude the matrix-vector product with $A$, the solve of the correction equation and the $\order(m^3)$ solve of the projected eigenvalue problem, all of which we assume identical for the two methods.
\begin{itemize}
    \item \emph{Classical Jacobi-Davidson}. Updating $H = V^*W$ requires computing $V^*\vec w$ and $\widehat{\vec{t}}^*W$, at a cost of $4nm$ operations; orthogonalizing the new direction against $V$ costs $4nm$ operations with Gram-Schmidt, and twice that if it is repeated for stability. Forming $\vec u = V\vec y$ and $\vec r = W\vec y - \theta \vec u$ costs a further $4nm$. The total is $12nm$ operations, or $16nm$ if orthogonalization is repeated for stability. Over $m$ iterations, the total cost is $\order(nm^2)$.
    \item \emph{Randomized Jacobi-Davidson}. In the randomized method, the inner products are computed in the smaller sketched space, so they cost only $\order(sm)$, where $s \ll n$. The update of $H$ costs $4sm$ flops and the projection coefficients $Q^*S\vec t$ in the randomized orthogonalization step cost $2sm$. The main operation involving vectors of length $n$ is the update $\widehat{\vec t} \gets \vec t - V(Q^*S\vec t)$, which costs about $2nm$ operations. Forming the Ritz vector and residual cost another $4nm$. Sketching the three new vectors $\vec w$, $\vec t$, and $\widehat{\vec t}$ \footnote{The third sketch is optional: $S\widehat{\vec t}$ can also be obtained from quantities that are already available, as $S\widehat{\vec t} = S\vec t - Q(Q^*S\vec t)$, which saves one sketching cost per iteration. We prefer to sketch $\widehat{\vec t}$ explicitly, because this keeps $Q = SV$ to working accuracy.} costs $3T_S$ where $T_S$ is the cost of applying the sketching matrix to a vector, e.g., $\order(n\log n)$ for an SRTT, or $\order(\zeta n)$ for a sparse embedding with $\zeta$ nonzeros per column. Thus, when $s\ll n$, the dominant cost is approximately $6nm + 3T_S$ per iteration. Over the full run, the total cost is again $\order(nm^2)$.
\end{itemize}
Both methods therefore have the same asymptotic complexity, but the classical one performs roughly twice as much arithmetic operations involving length-$n$ vectors. The saving comes from replacing length-$n$ inner products with length-$s$ inner products via sketching. The more important saving is in communication. In a distributed-memory setting, with the long vectors distributed across processors, the classical method performs $\order(1)$ global reductions on length-$n$ vectors per iteration for the Gram-Schmidt step and $\order(1)$ more for the update of $H$. Each such reduction introduces a synchronization point. In contrast, the randomized method performs the corresponding operations on the sketched vectors, which have length $s \ll n$ and can be replicated on every processor. However, the application of $S$ requires communication, making the communication cost of randomized Gram-Schmidt the same as classical Gram-Schmidt (CGS), but cheaper than modified Gram-Schmidt (MGS) or CGS with reorthogonalization (CGS2) at a similar stability as MGS~\cite[Sec.~2.4]{BalabanovGrigori22}. The trade-off is a sketching distortion $\epsilon$, whose effect on convergence is analyzed in \Cref{sec:analysis}. 

 \begin{remark}[Inexact solves and mixed precision]
    In practice, \eqref{eq:rce} is solved approximately using a few steps of a (preconditioned) Krylov method. We report the effect of a fixed inner iteration budget in \Cref{subsec:inexactce}. Sketching also makes the method a natural candidate for mixed-precision implementation \cite{BalabanovGrigori22}, since the sketched quantities need only be as accurate as the distortion $\epsilon$; this is done for the Hermitian case in~\cite{Gubleretal26}, but we do not pursue it here.  
 \end{remark}

\section{Convergence analysis} \label{sec:analysis}
We analyze the convergence of \Cref{alg:rjd} without restarting, under the assumption that the randomized correction equation \eqref{eq:rce} is solved exactly at each step. The argument combines three ingredients: (i) a bound on the sketched Ritz value; (ii) a bound on the sketched Ritz vector under a uniform separation condition; and (iii) the contraction produced by one exact step of the randomized correction equation, which is a single step of sketched inverse iteration.

Throughout, let $(\lambda, \vec x_R, \vec x_L)$ be a simple eigentriplet of $A$ normalized so that $\norm{\vec x_R} = \norm{\vec x_L} = 1$, where $A\vec x_R = \lambda \vec x_R$ and $\vec x_L^*A = \lambda \vec x_L^*$. Since $\lambda$ is simple, $\vec x_L^*\vec x_R \neq 0$, and the condition number of $\lambda$ is $\kappa(\lambda) = |\vec x_L^*\vec x_R|^{-1} \geq 1$. Let $\Pp = \frac{\vec x_R \vec x_L^*}{\vec x_L^*\vec x_R}$ be the spectral projector onto $\spn(\vec x_R)$, and $X_\perp \in \C^{n\times (n-1)}$ be a matrix with orthonormal columns spanning the complementary invariant subspace $\range(I - \Pp) = \ker(\Pp) = \spn(\vec x_L)^\perp$. Since $\Pp$ commutes with $A$, both $\range(\Pp)$ and $\range(I-\Pp)$ are $A$-invariant. Consequently, $[\vec x_R, X_\perp]$ is nonsingular and $A = [\vec x_R, X_\perp] \begin{bmatrix}
    \lambda & 0 \\ 0 & A_{22}
\end{bmatrix} [\vec x_R,X_\perp]^{-1}$. Here, $A_{22}$ is the deflated block of $A$ at $\lambda$ and represents the restriction of $A$ to the complementary invariant subspace $\range(I-\Pp)$ in the basis $X_\perp$; equivalently $AX_\perp = X_\perp A_{22}$ and $A_{22} = X_{\perp}^*AX_{\perp}$. A short computation gives $\kappa_2([\vec x_R,X_\perp]) = \kappa(\lambda)+\sqrt{\kappa(\lambda)^2-1}$, which is at most $2\kappa(\lambda)$, so the deflation basis is well-conditioned precisely when $\lambda$ is. Finally, define $\eta:=\sep(\lambda,A_{22}) = \sigma_{\min}(\lambda I - A_{22})$. Because $\lambda \neq \lambda(A_{22})$, the matrix $\lambda I - A_{22}$ is nonsingular, and hence $\eta >0$.

Throughout the analysis, if $\theta_m = \lambda$ and $\vec r_m = 0$, the exact eigenpair has been obtained. If $\theta_m = \lambda$, $\vec r_m \neq 0$, and the projected correction equation is solvable, then its exact solution produces $\vec u_m + \vec t_m \in \spn(\vec x_R)$, so convergence occurs in one further expansion. We therefore treat below the remaining case $\theta_m \neq \lambda$.

Canonical angles \cite{StewartSun90Book} are a natural measure to quantify the distance between two vectors or subspaces. Before beginning the convergence analysis, we define the angle notions the proofs rely on.

\begin{definition}[Canonical angles] \label{def:angles}
    For a nonzero vector $\vec x$ and an orthonormal matrix $W$, the canonical angle is measured by
    \begin{equation*}
        \sin\angle(\vec x,W) = \frac{\min\limits_{\vec w\in \spn(W)}\norm{\vec x-\vec w}}{\norm{\vec x}}, \quad \cos\angle(\vec x,W) = \frac{\norm{\Pp_W \vec x}}{\norm{\vec x}},
    \end{equation*} where $\Pp_W = WW^*$ is the orthogonal projector onto $\spn(W)$.
\end{definition}

Via canonical angles, we can always decompose a vector in components that are respectively parallel and orthogonal to a subspace, using sine and cosine of canonical angles. For example, if $\vec x$ is a unit vector and $V$ is an orthonormal matrix, $\vec x$ can be decomposed as $\vec x = c \, \vec v + s \, \vec v_\perp$, where $c = \cos\angle(\vec x,V)$, $s = \sin\angle(\vec x,V)$ and $\vec v \in \spn(V)$ and $\vec v_\perp \in \spn(V)^\perp$ with $\norm{\vec v} = \norm{\vec v_{\perp}} = 1$. The vectors $\vec v$ and $\vec v_\perp$ are, respectively, the normalizations of $VV^*\vec x \in \spn(V)$ and $(I - VV^*)\vec x \in \spn(V)^\perp$. This construction will be used repeatedly in the proofs that follow.

We measure convergence of the search space by $\rho_m:= \sin\angle(\vec x_R, V_m)$ and of the Ritz vector by $s_m:= \sin\angle(\vec x_R, {\vec u}_m)$. We also use the notation $c_m := \cos \angle(\vec x_R, \vec u_m)$. 


\subsection{The sketched Ritz value} For standard Galerkin extraction, the target Ritz value equals the Rayleigh quotient of the Ritz vector, that is, 
\begin{equation*}
	\c{\theta}_m = \frac{\c{\vec u}_m^*A\c{\vec u}_m}{\c{\vec u}_m^*\c{\vec u}_m} = \frac{\ip{\c{\vec u}_m}{A\c{\vec u}_m}}{\ip{\c{\vec u}_m}{\c{\vec u}_m}}.
\end{equation*}
For the randomized Jacobi-Davidson method, we use sketched Galerkin extraction. Letting $H = (SV_m)^*(SAV_m)$ and writing ${\vec u}_m = {V}_m {\vec y}$ with ${H}{\vec y} = {\theta}_m {\vec y}$ and $V_m$ sketch-orthogonal, we have
\begin{equation}
    {\theta}_m = \frac{(S{\vec u}_m)^*(SA{\vec u}_m)}{(S{\vec u}_m)^*(S{\vec u}_m)} = \frac{\ip{{\vec u}_m}{A{\vec u}_m}_S}{\ip{{\vec u}_m}{{\vec u}_m}_S}.
\end{equation} When we normalize ${\vec y}$ so that $\norm{{\vec y}} = 1$, then ${\theta}_m = \ip{{\vec u}_m}{A{\vec u}_m}_S$. 

The next bound makes precise the size of $|{\theta}_m-\lambda|$ in terms of the Ritz vector angle $s_m = \sin\angle(\vec x_R, {\vec u}_m)$.

\begin{lemma}[Sketched Ritz value error] \label{lem:sketchedRitzvalbnd}
    Let $\nu(\lambda):= \norm{(A^*-\bar{\lambda}I) \vec x_R}$ denote the adjoint residual norm, $s_m = \sin\angle(\vec x_R, {\vec u}_m)$, and $c_m = \cos\angle(\vec x_R, {\vec u}_m)$. If $S$ is an $\epsilon$-subspace embedding for $\spn([{V}_m, A{V}_m])$ and $\norm{\vec u_m}_S = 1$, then
    \begin{equation}
        \abs{{\theta}_m - \lambda} \leq \frac{s_m}{1-\epsilon}\left(c_m \nu(\lambda) + (s_m+\epsilon) \norm{A-\lambda I}\right).
    \end{equation}
    When the left and right eigenvectors coincide, e.g., when $A$ is normal, $\nu(\lambda) = 0$ and $\abs{{\theta}_m - \lambda} \leq \frac{s_m(s_m+\epsilon)}{1-\epsilon} \norm{A-\lambda I}$.
\end{lemma}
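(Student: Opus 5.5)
The plan is to write the Ritz value error as a sketched inner product, compare it with the corresponding exact inner product using the embedding property \eqref{eq:innerproduct}, and then bound the exact inner product through the decomposition of $\vec u_m$ along $\vec x_R$.

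\textbf{Step 1: reduce to a sketched inner product.} Since $\norm{\vec u_m}_S = 1$, the sketched Rayleigh quotient gives
\[
\theta_m - \lambda = \ip{\vec u_m}{(A-\lambda I)\vec u_m}_S .
\]
Both $\vec u_m$ and $A\vec u_m$ lie in $\spn([V_m, AV_m])$. Hence so does $(A-\lambda I)\vec u_m$, and \eqref{eq:innerproduct} applies:
\[
\abs{\theta_m-\lambda} \le \abs{\ip{\vec u_m}{(A-\lambda I)\vec u_m}} + \epsilon\norm{\vec u_m}\norm{(A-\lambda I)\vec u_m}.
\]

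\textbf{Step 2: decompose $\vec u_m$.} Using the construction after \Cref{def:angles}, write $\vec u_m = \norm{\vec u_m}(c_m\,\vec x_R' + s_m\vec w)$. Here $\vec x_R'$ is a unimodular multiple of $\vec x_R$, and $\vec w \perp \vec x_R$ with $\norm{\vec w}=1$. Because $(A-\lambda I)\vec x_R = 0$, we get
\[
(A-\lambda I)\vec u_m = \norm{\vec u_m}\, s_m (A-\lambda I)\vec w .
\]
This gives two consequences:
\begin{itemize}
\item the sketching error term is at most $\epsilon \norm{\vec u_m}^2 s_m \norm{A-\lambda I}$;
\item the exact inner product becomes
\[
\norm{\vec u_m}^2 s_m\Bigl( \bar c_m \ip{(A^*-\bar\lambda I)\vec x_R'}{\vec w} + \bar s_m\ip{\vec w}{(A-\lambda I)\vec w}\Bigr),
\]
where $c_m, s_m$ are real and nonnegative.
\end{itemize}
By Cauchy--Schwarz, the exact inner product is bounded by $\norm{\vec u_m}^2 s_m\bigl(c_m\nu(\lambda) + s_m\norm{A-\lambda I}\bigr)$. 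Note that $\vec w$ is only orthogonal to $\vec x_R$, not to $\vec x_L$, so the adjoint residual $\nu(\lambda)$ appears naturally here and is not zero in general.

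\textbf{Step 3: combine.} Adding the two bounds gives $\norm{\vec u_m}^2 s_m\bigl(c_m\nu(\lambda) + (s_m+\epsilon)\norm{A-\lambda I}\bigr)$. Since $\vec u_m \in \spn(V_m)$, \eqref{eq:subemb} yields $\norm{\vec u_m}^2 \le (1-\epsilon)^{-1}\norm{\vec u_m}_S^2 = (1-\epsilon)^{-1}$, which is the stated bound.

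\textbf{Normal case.} If $\vec x_L = \vec x_R$, then $\vec x_R^*A = \lambda\vec x_R^*$, so $A^*\vec x_R = \bar\lambda\vec x_R$ and $\nu(\lambda)=0$. The stated special case follows immediately.

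\textbf{Main obstacle.} The only delicate point is justifying the use of \eqref{eq:innerproduct} for the pair $\vec u_m$, $(A-\lambda I)\vec u_m$. This needs both vectors in the embedded subspace, which is exactly why the hypothesis is on $\spn([V_m, AV_m])$ and not merely on $\spn(V_m)$. The remaining care is bookkeeping: keep the factor $\norm{\vec u_m}^2$ explicit until the very end, so that the $(1-\epsilon)^{-1}$ factor appears only once.
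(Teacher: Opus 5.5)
Your proposal is correct and follows essentially the same route as the paper. Like the paper, you write $\theta_m-\lambda=\ip{\vec u_m}{(A-\lambda I)\vec u_m}_S$, pass to the exact inner product via \eqref{eq:innerproduct}, decompose $\vec u_m$ along $\vec x_R$ and its orthogonal complement, and finish with $\norm{\vec u_m}^2\le(1-\epsilon)^{-1}$. Your only addition is to make the phase explicit through the unimodular multiple $\vec x_R'$, which the paper leaves implicit.
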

\begin{proof}
    Write ${\vec u}_m = \norm{{\vec u}_m}(c_m  \vec x_R + s_m \vec v_m)$ with $\vec v_m \perp  \vec x_R$ and $\norm{\vec v_m} = 1$. Since $(A-\lambda I) \vec x_R = 0$, $(A-\lambda I) {\vec u}_m = \norm{{\vec u}_m} s_m (A-\lambda I)\vec v_m$. Noting $\norm{{\vec u}_m}_S = 1$, hence $\norm{\vec u_m}^2 \leq (1-\epsilon)^{-1}$, and expanding gives
    \begin{align*}
        \abs{{\theta}_m - \lambda}
        &= \abs{\ip{{\vec u}_m}{(A-\lambda I){\vec u}_m}_S} \leq  \abs{\ip{{\vec u}_m}{(A-\lambda I){\vec u}_m}} + \epsilon \norm{{\vec u}_m}\norm{(A-\lambda I){\vec u}_m} \\
        &\leq s_m \norm{{\vec u}_m}^2\abs{\ip{(A^*-\bar{\lambda}I)(c_m \vec x_R + s_m \vec v_m)}{\vec v_m}} + \epsilon \norm{{\vec u}_m}^2 s_m \norm{A-\lambda I} \\
        &\leq \frac{s_m}{1-\epsilon}\left(c_m \nu(\lambda) + (s_m+\epsilon) \norm{A-\lambda I}\right),
    \end{align*} where we used \eqref{eq:innerproduct} for the first inequality and \eqref{eq:subemb} for the final inequality.
\end{proof}


\subsection{The sketched Ritz vector}
For non-Hermitian $A$, the Rayleigh-Ritz vector need not converge merely because the subspace does; one needs a \emph{uniform separation condition} \cite{JiaStewart01} ensuring the target Ritz value stays isolated in the projected spectrum. 

The analysis of this section establishes convergence of the sketched Galerkin extraction to a target eigenpair $(\lambda,\vec x_R)$ under the uniform separation condition associated with the sketched inner product; see \Cref{def:uniformseparation}. This condition is a property of the projected pencil rather than of the location of $\lambda$ in the spectrum of $A$. Nevertheless, the spectral location of the target strongly affects how likely the condition is to hold. Standard Rayleigh--Ritz extraction often favors eigenvalues near the exterior of the spectral region, whereas, for an interior target, an unwanted or spurious eigenvalue of the projected problem may approach $\lambda$ as the trial subspace grows. In that case the separation constant may become small, and the Ritz vector may fail to converge even when the trial subspace contains an increasingly accurate approximation to $\vec x_R$ \cite{Jia97,JiaStewart01,MorganZeng98}. For Hermitian problems, the favorable behavior of extremal Ritz values is supported by the Cauchy interlacing theorem~\cite[Ch.~4]{Saad11}; for non-Hermitian problems, the exterior--interior distinction is instead a useful heuristic and does not by itself imply uniform separation. The results of this section therefore apply most naturally when the target remains uniformly separated in the projected spectrum, a situation often encountered for exterior eigenvalues. The harmonic variant in \Cref{subsec:harmonic} is designed for interior targets by applying the extraction to a shift-transformed problem in which eigenvalues near the shift become dominant, although an analogous separation condition is still required \cite{Morgan91,MorganZeng98}. The refined variant in \Cref{subsec:refined} instead minimizes the residual over the trial subspace and removes the need for separation from the remaining projected eigenvalues. For a simple target eigenvalue, the refined vector converges provided that the trial subspace approaches $\vec x_R$ and the accompanying eigenvalue approximation converges to $\lambda$~\cite{Jia97,JiaStewart01,Jia05}. Finally, Shao~\cite{Shao26} develops a randomized Rayleigh--Ritz procedure that, for a simple target eigenpair, extracts with high probability an approximation whose convergence rate is comparable to that of the trial subspace; we do not pursue this alternative in the present work.

Let $S$ be an $\epsilon$-subspace embedding for $\mathcal{W} = \spn([{V}_m,A{V}_m])$. Then $\ip{\vec u}{\vec v}_S = \ip{S\vec u}{S\vec v}$ is an inner product on $\mathcal{W}$. Suppose that $({\theta}_m,{\vec u}_m = {V_m}\vec y_m)$ is a Ritz pair, i.e., ${H}_m \vec y_m = {\theta}_m \vec y_m$ with $\norm{\vec y_m} = 1$. Now extend $\vec y_m$ to a unitary matrix $Y = [\vec y_m,Y_\perp]$ such that $\vec y_m^*Y_\perp = 0$, then $Y^*{H}_mY$ can be partitioned as
\begin{equation} \label{eq:partitionUSC}
    Y^*{H}_mY = [\vec y_m,Y_\perp]^*{H}_m [\vec y_m,Y_\perp] = \begin{bmatrix}
        {\theta}_m & H_{12} \\ 0 & H_{22}
    \end{bmatrix}
\end{equation} where $H_{12} = \vec y_m^*{H}_mY_\perp$ and $H_{22} = Y_\perp^*{H}_m Y_\perp$. The uniform separation condition defines how well the target eigenvalue $\lambda$ of the original matrix $A$ is separated from the non-targeted eigenvalues of ${H}_m$, i.e., the eigenvalues of $H_{22}$. This is defined below in \Cref{def:uniformseparation}.

\begin{definition}[Uniform separation condition] \label{def:uniformseparation}
    In the above setting, partition $Y^*{H}_m Y$ as in \eqref{eq:partitionUSC}. Suppose that the Ritz pair $({\theta}_m,{\vec u}_m = {V}_m \vec y_m)$ targets the eigenpair $(\lambda,\vec x_R)$ of $A$. Then the iteration satisfies the uniform separation condition with constant $\beta>0$ if $\sigma_{\min} (H_{22} - \lambda I) \geq \beta$ for all $m$, i.e., $\lambda$ is separated from the spectrum of $H_{22}$.
\end{definition}

\begin{lemma}[Sketched Ritz vector bound] \label{lem:sketchedRitzvecbnd}
Suppose $S$ is an $\epsilon$-subspace embedding for $\spn([{V}_m, A{V}_m])$ and that the uniform separation condition (\Cref{def:uniformseparation}) holds with constant $\beta$. Then, letting $K_\epsilon := \frac{1+\epsilon}{1-\epsilon}$, we have
\begin{equation}
    \sin\angle(\vec x_R, {\vec u}_m) \leq  \sqrt{1+\frac{K_\epsilon \norm{A-\lambda I}^2}{\beta^2}} \sin\angle(\vec x_R, {V}_m).
\end{equation} 
\end{lemma}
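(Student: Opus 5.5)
This follows the Jia--Stewart argument for Ritz vectors, with every Euclidean orthogonal projection onto $\Vv_m=\spn(V_m)$ replaced by the $S$-orthogonal one. Norms are moved back to Euclidean ones through the embedding bounds \eqref{eq:subemb}.

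\emph{Step 1: best approximation and its error.} Let $\Pi := V_m(SV_m)^*S$, the $S$-orthogonal projector onto $\Vv_m$. Decompose
\[
\vec x_R = \gamma\,\vec w + \rho_m\,\vec w_\perp,
\]
where $\vec w\in\Vv_m$ is the unit Euclidean best approximation, $\vec w_\perp\perp\Vv_m$ is a unit vector, and $\gamma=\cos\angle(\vec x_R,V_m)$. Write $\vec w = V_m\vec z$. From $A\vec x_R=\lambda\vec x_R$ we get
\[
(A-\lambda I)\vec w = -\tfrac{\rho_m}{\gamma}(A-\lambda I)\vec w_\perp .
\]
The left side lies in $\mathcal W$. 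Applying $(SV_m)^*S$ and using the sketch-orthonormality of $V_m$ gives
\[
(H_m-\lambda I)\vec z = \vec g, \qquad \vec g := (SV_m)^*S(A-\lambda I)\vec w .
\]
Because $S$ is an embedding on $\mathcal W$, we have $\norm{\vec g}\le\sqrt{1+\epsilon}\,\norm{(A-\lambda I)\vec w}\le \sqrt{1+\epsilon}\,\frac{\rho_m}{\gamma}\norm{A-\lambda I}$.

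\emph{Step 2: use the separation.} Write $\vec z = \xi\vec y_m + Y_\perp\vec p$. Multiply $(H_m-\lambda I)\vec z=\vec g$ by $Y_\perp^*$ and use the block triangular form \eqref{eq:partitionUSC}, which gives $Y_\perp^*H_m\vec y_m=0$. This yields
\[
(H_{22}-\lambda I)\vec p = Y_\perp^*\vec g, \qquad\text{so}\qquad \norm{\vec p}\le \norm{\vec g}/\beta .
\]
Hence $\vec w$ differs from a multiple of $\vec u_m$ by $V_mY_\perp\vec p$, whose Euclidean norm is at most $(1-\epsilon)^{-1/2}\norm{\vec p}$.

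\emph{Step 3: assemble.} By Definition \ref{def:angles}, the distance from $\vec x_R$ to $\spn(\vec u_m)$ is at most the distance from $\vec x_R$ to $\gamma\xi\vec u_m$. Now
\[
\vec x_R-\gamma\xi\vec u_m = \rho_m\vec w_\perp + \gamma V_mY_\perp\vec p ,
\]
and its first term is Euclidean-orthogonal to $\Vv_m$ while its second lies in $\Vv_m$. By Pythagoras,
\[
s_m^2\le \rho_m^2 + \gamma^2\frac{1}{1-\epsilon}\cdot\frac{(1+\epsilon)\rho_m^2\norm{A-\lambda I}^2}{\gamma^2\beta^2} = \rho_m^2\Bigl(1+\frac{K_\epsilon\norm{A-\lambda I}^2}{\beta^2}\Bigr),
\]
which is exactly the claim. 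The factors of $\gamma$ cancel cleanly, so no assumption $\gamma\neq0$ is needed beyond the trivial case.

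\emph{Main obstacle.} The delicate step is Step 1. The Euclidean best approximation $\vec w$ must be used rather than the $S$-orthogonal projection, so that the Pythagorean split in Step 3 is exact. On the other hand, the equation for $\vec z$ has to be derived with the sketched projection $(SV_m)^*S$, since only that produces $H_m$. The bound on $\norm{\vec g}$ must then rely only on the embedding property on $\mathcal W$. This works because $\vec w\in\Vv_m$ implies $(A-\lambda I)\vec w\in\mathcal W$, and because $\norm{(SV_m)^*\vec q}\le\norm{\vec q}$ since $SV_m$ has orthonormal columns. I expect this to yield precisely the constant $K_\epsilon$.
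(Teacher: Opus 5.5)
Your proof is correct and follows the paper's argument essentially line by line. Your $\gamma\vec w = \gamma V_m\vec z$ is the paper's Euclidean projection $V_m\vec c$ with $\vec c = V_m^\dagger\vec x_R = \gamma\vec z$, and the remaining steps coincide: the identity for $(H_m-\lambda I)\vec c$ obtained through $(SV_m)^*S$, the embedding bound using $(A-\lambda I)V_m\vec c\in\spn([V_m,AV_m])$, the separation step via $Y_\perp^*$, and the final Pythagorean split. The only difference is your normalization by $\gamma$, which cancels exactly as you note; also, the projector $\Pi$ you introduce is never used.
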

\begin{proof}
    Decompose $\vec x_R = {V}_m \vec c + V_{\perp} \vec x_3$ where $V_\perp$ is an orthonormal basis of $\spn({V}_m)^\perp$, $\vec c = {V}_m^\dagger \vec x_R$, and $\vec x_3 = V_\perp^* \vec x_R$. Note that $\norm{\vec x_3} = \sin\angle(\vec x_R, {V}_m)=:\rho_m$. Now extend $\vec y_m$ to a unitary matrix $Y = [\vec y_m,Y_\perp]$ such that $\vec y_m^*Y_\perp = 0$. We can write $\vec c = x_1 \vec y_m + Y_\perp \vec x_2$ where $x_1 = \vec y_m^*\vec c$ and $\vec x_2 = Y_\perp^*\vec c$.

    Using this setup and $\norm{{V}_m} \leq (1-\epsilon)^{-1/2}$, we obtain
    \begin{align*}
        \sin^2\angle(\vec x_R, {\vec u}_m) = \min_t \frac{\norm{\vec x_R - t{\vec u}_m}^2}{\norm{\vec x_R}^2} &= \min_t \norm{{V}_m \vec c+V_\perp \vec x_3 - t {V}_m \vec y_m}^2 \\
        &= \min_t \norm{{V}_m (x_1 \vec y_m+Y_\perp \vec x_2 - t\vec y_m)}^2 +\norm{V_\perp \vec x_3}^2 \\
        &\leq \norm{{V}_m}^2 \min_t  \norm{(x_1 - t)\vec y_m +Y_\perp \vec x_2}^2 + \rho_m^2 \\
        &\leq \frac{1}{1-\epsilon} \norm{Y_\perp \vec x_2}^2 +\rho_m^2, 
    \end{align*} where we set $t = x_1$ for the last inequality. 
    
    Now we bound $\norm{Y_\perp \vec x_2} = \norm{\vec x_2}$. We have
    \begin{align*}
        {H}_m \vec c = (S{V}_m)^*(SA{V}_m{V}_m^\dagger \vec x_R) &= \lambda (S{V}_m)^*S\vec x_R - (S{V}_m)^* SA(I - {V}_m{V}_m^\dagger)\vec x_R \\
        &= \lambda (S{V}_m)^*S({V}_{m}\vec c +V_\perp \vec x_3) - (S{V}_m)^* SA(I - {V}_m{V}_m^\dagger)\vec x_R \\
        &= \lambda \vec c - (S{V}_m)^* S(A-\lambda I)V_\perp V_\perp^* \vec x_R.
    \end{align*} Now multiply $Y_\perp^*$ on the left and using $Y_\perp^*{H}_m \vec y_m = 0$ and $Y_\perp ^*\vec y_m = 0$ to obtain,
    \begin{equation*}
        Y_\perp^*({H}_m-\lambda I) \vec c = Y_\perp^*({H}_m - \lambda I) (\vec y_m x_1 + Y_\perp \vec x_2) = Y_\perp^*({H}_m - \lambda I) Y_\perp \vec x_2 = (H_{22} - \lambda I) \vec x_2.
    \end{equation*}
    Therefore by using the uniform separation condition we get
    \begin{align*}
        \norm{\vec x_2} = \norm{(H_{22} - \lambda I)^{-1} Y_\perp^*({H}_m - \lambda I)\vec c} &\leq \frac{\norm{({H}_m - \lambda I)\vec c}}{\beta} \\
        &= \frac{\norm{(S{V}_m)^* S(A-\lambda I)V_\perp V_\perp^* \vec x_R}}{\beta} \\
        &\leq \frac{\norm{S(A-\lambda I)V_\perp V_\perp^* \vec x_R}}{\beta} \\
        &\leq \frac{\sqrt{1+\epsilon}\norm{(A-\lambda I)V_\perp V_\perp^* \vec x_R}}{\beta} \\
        &\leq \frac{\sqrt{1+\epsilon}\norm{A-\lambda I}\rho_m}{\beta},
    \end{align*} where for the penultimate inequality we use the fact that $S$ is a subspace embedding on $\spn([{V}_m,A{V}_m])$ as $(A-\lambda I)V_\perp V_\perp^* \vec x_R =  (A-\lambda I)(I-{V}_m{V}_m^\dagger)\vec x_R = -(A-\lambda I)V_m V_m^\dagger \vec x_R  \in \spn(A{V}_m) + \spn({V}_m)$, where we used $(A-\lambda I)\vec x_R = 0$.
    
    Combining the two results we get
    \begin{equation}
        \sin\angle(\vec x_R,{\vec u}_m) \leq \sqrt{ \rho_m^2+\frac{1}{1-\epsilon} \norm{Y_\perp \vec x_2}^2} \leq \sqrt{1+\frac{K_\epsilon \norm{A-\lambda I}^2}{\beta^2}} \rho_m,
    \end{equation} as required.
\end{proof}


\subsection{Sketched inverse iteration}
The correction step performs one step of inverse iteration in the sketched inner product. By \Cref{thm:ceequalsinviter}, the exact solution of \eqref{eq:rce} gives the expansion direction $\vec w_m := (A-{\theta}_mI)^{-1}{\vec u}_m$, so $\vec w_m \in \spn({V}_{m+1})$ and $\rho_{m+1} = \sin\angle(\vec x_R, {V}_{m+1}) \leq \sin\angle(\vec x_R,\vec w_m)$.

\begin{lemma}[Sketched inverse iteration accuracy] \label{lem:sketchedinviteraccuracy}
    Let $s_m = \sin\angle(\vec x_R, {\vec u}_m)$, $c_m = \cos\angle(\vec x_R, {\vec u}_m)$ and $\eta: = \sep(\lambda, A_{22})>0$. Assume $|\lambda - {\theta}_m| < \eta$ and $\kappa(\lambda)s_m<1$ with $\kappa(\lambda) = \frac{1}{|\vec x_L^*\vec x_R|}$. Set 
    \begin{equation}
        q_m:= \frac{\kappa(\lambda)|\lambda - {\theta}_m|}{\eta - |\lambda - {\theta}_m|} \cdot \frac{s_m}{c_m - s_m\sqrt{\kappa(\lambda)^2-1}}.
    \end{equation} If $q_m <1$, then the inverse iteration direction $\vec w_m = (A-{\theta_m}I)^{-1}{\vec u}_m$ satisfies
    \begin{equation}
        \sin\angle(\vec x_R,\vec w_m) \leq \frac{q_m}{1-q_m}.
    \end{equation}
\end{lemma}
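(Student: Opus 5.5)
Decompose $\vec u_m$ along the spectral splitting rather than the orthogonal one. Write $\vec u_m = \gamma \vec x_R + X_\perp \vec z$, where $\gamma\vec x_R = \Pp\vec u_m$ and $X_\perp\vec z = (I-\Pp)\vec u_m$. Since $(A-\theta_m I)^{-1}$ commutes with $\Pp$ and $AX_\perp = X_\perp A_{22}$, we get
\begin{equation*}
\vec w_m = \frac{\gamma}{\lambda-\theta_m}\vec x_R + X_\perp (A_{22}-\theta_m I)^{-1}\vec z .
\end{equation*}
This gives the bound $\sin\angle(\vec x_R,\vec w_m) \le \norm{X_\perp(A_{22}-\theta_m I)^{-1}\vec z}/\norm{\vec w_m}$, using $\vec w_m - \tfrac{\gamma}{\lambda-\theta_m}\vec x_R$ as a candidate in the definition of sine. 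The proof then needs an upper bound on the $X_\perp$ component and a lower bound on $\norm{\vec w_m}$.

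\emph{Resolvent bound.} Since $A_{22}-\theta_m I = (A_{22}-\lambda I) + (\lambda-\theta_m)I$, Weyl's inequality for singular values gives $\sigma_{\min}(A_{22}-\theta_m I)\ge \eta-|\lambda-\theta_m|>0$. Hence
\begin{equation*}
\norm{X_\perp(A_{22}-\theta_m I)^{-1}\vec z}\le \frac{\norm{\vec z}}{\eta-|\lambda-\theta_m|}.
\end{equation*}

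\emph{Spectral coordinates of $\vec u_m$.} Normalize $\norm{\vec u_m}=1$, since the angles are scale invariant, and write $\vec u_m = c_m\vec x_R + s_m\vec v$ with $\vec v\perp\vec x_R$ and $\norm{\vec v}=1$. Then:
\begin{itemize}
\item $\norm{\vec z} = \norm{(I-\Pp)\vec u_m} = s_m\norm{(I-\Pp)\vec v}$. On $\vec x_R^\perp$ the projector $I-\Pp$ has norm at most $\kappa(\lambda)$, so $\norm{\vec z}\le \kappa(\lambda)s_m$.
\item $\gamma = \vec x_L^*\vec u_m/\vec x_L^*\vec x_R = c_m + s_m\,\vec x_L^*\vec v/\vec x_L^*\vec x_R$. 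The part of $\vec x_L$ orthogonal to $\vec x_R$ has norm $\sqrt{1-|\vec x_L^*\vec x_R|^2}$, so $|\vec x_L^*\vec v|\le\sqrt{1-\kappa(\lambda)^{-2}}$. Therefore
\begin{equation*}
|\gamma|\ge c_m - s_m\sqrt{\kappa(\lambda)^2-1}>0 .
\end{equation*}
\end{itemize}
Positivity of the last expression is where the hypotheses $\kappa(\lambda)s_m<1$ and $q_m<1$ are implicitly needed.

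\emph{Lower bound on $\norm{\vec w_m}$.} Set $a=|\gamma|/|\lambda-\theta_m|$ and $b=\norm{X_\perp(A_{22}-\theta_m I)^{-1}\vec z}$, so $\sin\angle\le b/\norm{\vec w_m}$. This is the step I expect to be the main obstacle. The $\vec x_R$ and $X_\perp$ components are not orthogonal, so $\norm{\vec w_m}$ cannot simply be computed by Pythagoras. The clean way is the triangle inequality $\norm{\vec w_m}\ge a-b$, which gives $\sin\angle \le b/(a-b) = (b/a)/(1-b/a)$. From the estimates above,
\begin{equation*}
\frac{b}{a}\le \frac{\kappa(\lambda)|\lambda-\theta_m|}{\eta-|\lambda-\theta_m|}\cdot\frac{s_m}{c_m-s_m\sqrt{\kappa(\lambda)^2-1}} = q_m .
\end{equation*}
The map $x\mapsto x/(1-x)$ is increasing on $[0,1)$, so the condition $q_m<1$ yields $\sin\angle(\vec x_R,\vec w_m)\le q_m/(1-q_m)$. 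The remaining work is to check carefully the two geometric constants $\norm{(I-\Pp)|_{\vec x_R^\perp}}\le\kappa(\lambda)$ and $|\vec x_L^*\vec v|\le\sqrt{1-\kappa(\lambda)^{-2}}$. Both are elementary two-dimensional computations in $\spn\{\vec x_R,\vec x_L\}$.
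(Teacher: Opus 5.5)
Your proposal is correct and follows the paper's proof essentially step for step: the same spectral splitting $\vec u_m = \xi_m\vec x_R + (I-\Pp)\vec u_m$, the same bounds $\abs{\xi_m}\ge c_m - s_m\sqrt{\kappa(\lambda)^2-1}$ and $\norm{(I-\Pp)\vec u_m}\le\kappa(\lambda)s_m$, and the same Weyl-based resolvent bound on the complementary invariant subspace. The only cosmetic difference is in the last step. The paper bounds $\sin\le\tan = \norm{(I-\vec x_R\vec x_R^*)\vec w_m}/\abs{\vec x_R^*\vec w_m}$ and lower-bounds $\abs{\vec x_R^*\vec w_m}$ by $a-b$, whereas you lower-bound $\norm{\vec w_m}$ by $a-b$; both give the same $b/(a-b)$. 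One small correction: positivity of $c_m - s_m\sqrt{\kappa(\lambda)^2-1}$ needs only $\kappa(\lambda)s_m<1$, since it is equivalent to $s_m^2\kappa(\lambda)^2<1$, while $q_m<1$ is used only to guarantee $a>b$.
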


\begin{proof}
    Let $\Pp = \frac{\vec x_R\vec x_L^*}{\vec x_L^*\vec x_R}$ be the spectral projector onto $\spn(\vec x_R)$. With $\norm{\vec x_L} = \norm{\vec x_R} = 1$, we have $\norm{\Pp} = \frac{1}{\abs{\vec x_L^*\vec x_R}} = \kappa(\lambda)$. By the projector identity \cite{Szyld06}, $\kappa(\lambda) = \norm{\Pp} = \norm{I-\Pp}$. 
    
	Without loss of generality we assume $\norm{\vec u_m}= 1$ as angles are invariant under scaling.
    Now decompose ${\vec u}_m$ as
    \begin{equation*}
        {\vec u}_m = \xi_m \vec x_R + \vec y_m, \quad \xi_m:= \frac{\vec x_L^*{\vec u}_m}{\vec x_L^*\vec x_R}, \quad \vec y_m:= (I-\Pp){\vec u}_m \in \spn(\vec x_L)^\perp.
    \end{equation*} We bound $\xi_m$ and $\norm{\vec y_m}$. Write ${\vec u}_m = c_m\vec x_R + s_m\vec v_m$ with $\vec v_m \perp \vec x_R$ and $\norm{\vec v_m} =1$, so that $\xi_m = c_m + s_m \frac{\vec x_L^*\vec v_m}{\vec x_L^*\vec x_R}$. Writing $\vec x_L = (\vec x_R^*\vec x_L)\vec x_R+(I-\vec x_R\vec x_R^*)\vec x_L$ and since $\vec x_R\perp \vec v_m$,
    \begin{equation*}
        \left\lvert\frac{\vec x_L^*\vec v_m}{\vec x_L^*\vec x_R}\right\rvert \leq \kappa(\lambda) \, \norm{(I-\vec x_R\vec x_R^*)\vec x_L} = \kappa(\lambda)\sqrt{1-\kappa(\lambda)^{-2}} = \sqrt{\kappa(\lambda)^2-1}.
    \end{equation*} Hence, we obtain
	\begin{equation}
		\label{eq:lemma-proof-xim-inequality}
		\abs{\xi_m}\geq c_m - s_m\sqrt{\kappa(\lambda)^2-1},
	\end{equation}
	and note that $\abs{\xi_m} >0$, since $\kappa(\lambda)s_m <1$ implies $s_m^2 (\kappa(\lambda)^2-1) < 1- s_m^2 = c_m^2$. For $\norm{\vec y_m}$, we note $\vec y_m = (I-\Pp)(c_m\vec x_R+s_m \vec v_m) = s_m\left(\vec v_m - \frac{\vec x_L^*\vec v_m}{\vec x_L^*\vec x_R}\vec x_R\right)$ using $\Pp \vec x_R = \vec x_R$ and $\Pp \vec v_m = \frac{\vec x_L^*\vec v_m}{\vec x_L^*\vec x_R}\vec x_R$, so $\vec v_m\perp \vec x_R$ gives
    \begin{equation}
		\label{eq:lemma-proof-ym-inequality}
        \norm{\vec y_m} = s_m \sqrt{1+\left\lvert \frac{\vec x_L^*\vec v_m}{\vec x_L^*\vec x_R}\right\rvert^2} \leq s_m \sqrt{1+(\kappa(\lambda)^2-1)}= s_m \kappa(\lambda).
    \end{equation}

    Now we compute the angle between $\vec x_R$ and $\vec w_m = (A-{\theta}_m I)^{-1}{\vec u}_m$. First, note that
    \begin{equation*}
        \vec w_m = (A-{\theta}_m I)^{-1}(\xi_m\vec x_R+\vec y_m) = \frac{\xi_m}{\lambda - {\theta}_m}\vec x_R + \vec z_m, \quad \vec z_m:=(A-{\theta}_m I)^{-1}\vec y_m.
    \end{equation*} Since $\vec y_m \in \spn(\vec x_L)^\perp$, which is $A$-invariant with $AX_\perp = X_\perp A_{22}$,
    \begin{equation*}
        \norm{(A-{\theta}_m I)^{-1}\vert_{\spn(\vec x_L)^\perp}} = (\sep({\theta}_m,A_{22}))^{-1} \leq (\eta - \abs{\lambda - {\theta}_m})^{-1}
    \end{equation*} by Weyl's inequality. Hence, using \cref{eq:lemma-proof-ym-inequality}, 
    \begin{equation}
		\label{eq:lemma-proof-zm-inequality}
        \norm{\vec z_m} \leq \frac{\norm{\vec y_m}}{\eta - \abs{\lambda - {\theta}_m}} \leq \frac{\kappa(\lambda)s_m}{\eta - \abs{\lambda - {\theta}_m}}.
    \end{equation} 
	Therefore
    \begin{align*}
        \sin\angle(\vec x_R,\vec w_m) \leq \tan\angle(\vec x_R,\vec w_m) = \frac{\norm{(I-\vec x_R\vec x_R^*)\vec w_m}}{\abs{\vec x_R^*\vec w_m}} &= \frac{\norm{(I-\vec x_R\vec x_R^*)\vec z_m}}{\left\lvert \frac{\xi_m}{\lambda - {\theta}_m} + \vec x_R^*\vec z_m \right\rvert} \\
        \leq \frac{\norm{\vec z_m}}{ \left\lvert \frac{\xi_m}{\lambda - {\theta}_m} \right\rvert- \norm{\vec z_m}}
    \end{align*} whenever $\left\lvert \frac{\xi_m}{\lambda - {\theta}_m} \right\rvert > \norm{\vec z_m}$, which is satisfied by the assumption $q_m <1$ as
    \begin{equation*}
        \norm{\vec z_m} \leq \frac{\kappa(\lambda)s_m}{\eta - \abs{\lambda - {\theta}_m}} < \frac{c_m - s_m\sqrt{\kappa(\lambda)^2-1}}{\abs{\lambda- {\theta}_m}} \leq \frac{\abs{\xi_m}}{\abs{\lambda- {\theta}_m}},
    \end{equation*} 
	where we used \cref{eq:lemma-proof-zm-inequality,eq:lemma-proof-xim-inequality}.
	Using the above we obtain
    \begin{equation*}
        \norm{\vec z_m} \left(\frac{\abs{\xi_m}}{\abs{\lambda- {\theta}_m}}\right)^{-1} \leq \frac{\kappa(\lambda)s_m}{\eta - |\lambda - {\theta}_m|} \cdot \frac{|\lambda - {\theta}_m|}{c_m - s_m \sqrt{\kappa(\lambda)^2-1}} = q_m
    \end{equation*}
    Finally, since $t\mapsto \frac{t}{1-t}$ is increasing on $[0,1)$, we get
    \begin{align*}
        \sin\angle(\vec x_R,\vec w_m) \leq \frac{\norm{\vec z_m}\left(\frac{\abs{\xi_m}}{\abs{\lambda- {\theta}_m}}\right)^{-1}}{ 1 - \norm{\vec z_m}\left(\frac{\abs{\xi_m}}{\abs{\lambda- {\theta}_m}}\right)^{-1}} \leq \frac{q_m}{1-q_m}.
    \end{align*}
\end{proof}

\begin{remark} \label{remark:inviternormal}
When $A$ is normal, $\vec x_L = \vec x_R$, so $\kappa(\lambda) = 1$ and the spectral projector $\Pp$ is orthogonal with $(A-{\theta}_m I)^{-1}\vec v_m \in \spn(\vec x_R)^\perp$. Therefore the proof simplifies leading to the bound
    \begin{equation}
        \sin\angle(\vec x_R,\vec w_m) \leq \frac{|\lambda - {\theta}_m|}{\eta - |\lambda - {\theta}_m|} \frac{s_m}{c_m}.
    \end{equation}
\end{remark}

\subsection{Local quadratic convergence}
Combining the lemmas proven in this section (\Cref{lem:sketchedRitzvalbnd,lem:sketchedRitzvecbnd,lem:sketchedinviteraccuracy}), we prove quadratic convergence of the randomized Jacobi-Davidson method (RJD). 
\begin{theorem}[Local quadratic convergence of RJD] \label{thm:convRJD}
    Let $(\lambda,\vec x_R,\vec x_L)$ be a simple eigentriplet of $A\in \C^{n\times n}$ with $\norm{\vec x_L} = \norm{\vec x_R} = 1$, $\kappa(\lambda) = 1/\abs{\vec x_L^*\vec x_R}$, separation $\eta = \sep(\lambda,A_{22}) > 0$ and adjoint residual norm $\nu(\lambda) = \norm{(A^*-\bar{\lambda}I)\vec x_R}$ as defined previously. Let $M_\lambda = \nu(\lambda) + (1+\epsilon)\norm{A - \lambda I}$, and let $C_{RR} = \sqrt{1+K_{\epsilon}\norm{A - \lambda I}^2/\beta^2}$ be the Ritz vector constant of \Cref{lem:sketchedRitzvecbnd}. Fix $\epsilon \in (0,1)$ and suppose
    \begin{enumerate}[label=(\roman*)]
        \item the correction equation \eqref{eq:rce} is solved exactly at each step;
        \item $S$ is an $\epsilon$-subspace embedding for $\spn([{V}_m,A{V}_m])$ at each step;
        \item the sketched uniform separation condition (\Cref{def:uniformseparation}) holds with constant $\beta$;
        \item the initial subspace angle $\rho_0 = \sin\angle(\vec x_R,{V}_0)$ satisfies $\rho_0 < \rho_*$ where 
        \begin{equation}
			\rho_*:= \frac{(1-\epsilon)\eta}{16\kappa(\lambda) M_\lambda C_{RR}^2}.
		\end{equation}
    \end{enumerate}
    Then, for every $m\geq 0$, the subspace angles $\rho_m = \sin\angle(\vec x_R,{V}_m)$ contract,
    \begin{equation} \label{eq:subspconvresult}
        \rho_{m+1} < \frac{\rho_m^2}{\rho_*}, 
    \end{equation} so $\rho_m$ converges at least quadratically; the Ritz vector angle $s_m = \sin\angle(\vec x_R,{u}_m)$ satisfies $s_m \leq C_{RR}\rho_m$, and it converges at the same rate. 
	\end{theorem}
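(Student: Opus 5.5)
The argument is an induction on $m$ that chains the three lemmas of this section. The inductive hypothesis is $\rho_m \le \rho_0 < \rho_*$; the induction then yields $\rho_{m+1} < \rho_m^2/\rho_* < \rho_m$. Fix $m$ with $\rho_m<\rho_*$.

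\textbf{Step 1 (Ritz vector).} By \Cref{lem:sketchedRitzvecbnd}, under (ii) and (iii), we have $s_m \le C_{RR}\rho_m$, which is also the second claim of the theorem. Since $C_{RR}\ge1$, $\kappa(\lambda)\ge1$ and $M_\lambda\ge\norm{A-\lambda I}$, the definition of $\rho_*$ makes $s_m$ small. For instance, $\kappa(\lambda) s_m \le \kappa(\lambda) C_{RR}\rho_* \le 1/16$, using $\eta\le \norm{A-\lambda I}+\ldots$. I expect to need a crude bound of the form $\eta \le \sigma_{\max}(\lambda I - A_{22}) \le \norm{A-\lambda I}\cdot$(const) to convert the ratio $\eta/M_\lambda$ into a number at most about $1$. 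This is a delicate point: $A_{22}=X_\perp^*AX_\perp$, so $\norm{\lambda I-A_{22}}\le\norm{A-\lambda I}$, and hence $\eta\le\norm{A-\lambda I}\le M_\lambda$. Consequently $s_m\le 1/16$, $c_m\ge\sqrt{1-s_m^2}$, and $c_m - s_m\sqrt{\kappa(\lambda)^2-1}\ge c_m-\kappa(\lambda)s_m\ge 1/2$, say.

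\textbf{Step 2 (Ritz value).} From \Cref{lem:sketchedRitzvalbnd} with $c_m\le1$ and $s_m\le1$, we get $|\theta_m-\lambda|\le \frac{s_m}{1-\epsilon}(\nu(\lambda)+(1+\epsilon)\norm{A-\lambda I}) = \frac{s_m M_\lambda}{1-\epsilon}$. Inserting $s_m\le C_{RR}\rho_m<C_{RR}\rho_*$ gives $|\theta_m-\lambda| < \eta/(16\kappa(\lambda) C_{RR})\le\eta/16$. Hence the hypotheses $|\lambda-\theta_m|<\eta$ and $\eta-|\lambda-\theta_m|\ge \tfrac{15}{16}\eta$ of \Cref{lem:sketchedinviteraccuracy} hold.

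\textbf{Step 3 (contraction).} Substituting into the definition of $q_m$ gives
\[
q_m \le \frac{\kappa(\lambda)\, s_m M_\lambda/(1-\epsilon)}{\tfrac{15}{16}\eta}\cdot 2 s_m \le \frac{32\,\kappa(\lambda) M_\lambda C_{RR}^2}{15(1-\epsilon)\eta}\,\rho_m^2 = \frac{32}{15\cdot16}\,\frac{\rho_m^2}{\rho_*}.
\]
In particular $q_m\le \tfrac{2}{15}\rho_m/\rho_*\cdot\rho_m<2/15<1$, so \Cref{lem:sketchedinviteraccuracy} applies. By \Cref{thm:ceequalsinviter} and (i), $\vec w_m\in\spn(V_{m+1})$. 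Therefore
\[
\rho_{m+1}\le\sin\angle(\vec x_R,\vec w_m)\le \frac{q_m}{1-q_m}\le \frac{15}{13}q_m<\frac{\rho_m^2}{\rho_*}.
\]
The solvability assumption of \Cref{thm:ceequalsinviter}, $(S\vec u_m)^*S\vec w_m\ne0$, is justified in the same way. We have $(A-\theta_mI)^{-1}\vec u_m$ dominated by $\frac{\xi_m}{\lambda-\theta_m}\vec x_R$, and $\vec u_m\in\spn(V_m)$ with $\vec w_m\in\spn(V_{m+1})$ lies in the embedded space. A sketched inner product close to $\xi_m/(\lambda-\theta_m)$ times $\ip{\vec u_m}{\vec x_R}_S$ therefore follows, using $c_m\ge1/2$ and the embedding; alternatively this is absorbed into assumption (ii). The degenerate case $\theta_m=\lambda$ was already treated before the lemmas.

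\textbf{Conclusion and main obstacle.} Since $\rho_m<\rho_*$, the bound gives $\rho_{m+1}<\rho_m<\rho_*$, which closes the induction; iterating $\rho_{m+1}/\rho_*<(\rho_m/\rho_*)^2$ yields quadratic convergence. The main obstacle is the bookkeeping in Steps 1–3: verifying that the single constant $16$ in $\rho_*$ simultaneously secures $\kappa(\lambda)s_m$ small, $|\theta_m-\lambda|\le\eta/16$, and the final factor $\frac{32}{15\cdot16}\cdot\frac{15}{13}<1$. This needs the inequality $\eta\le M_\lambda$ and the lower bound on $c_m-s_m\sqrt{\kappa^2-1}$, which I would check first. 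The strict inequality comes from $\rho_m<\rho_*$.
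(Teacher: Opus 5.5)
Your proposal is correct and follows the paper's proof essentially step for step. Both arguments induct on $\rho_m<\rho_*$, use the Ritz vector bound $s_m\le C_{RR}\rho_m$ and the Ritz value bound $|\lambda-\theta_m|\le s_mM_\lambda/(1-\epsilon)$, invoke $\eta\le\norm{A-\lambda I}\le M_\lambda$ to verify the hypotheses of \Cref{lem:sketchedinviteraccuracy}, and conclude with $\rho_{m+1}\le q_m/(1-q_m)$. The only difference is that you carry sharper intermediate constants ($\kappa(\lambda)s_m\le 1/16$, $|\lambda-\theta_m|\le\eta/16$, $q_m\le\frac{2}{15}\rho_m^2/\rho_*$) where the paper settles for $1/4$, $\eta/2$ and $\frac12\rho_m^2/\rho_*$.

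Your side argument for $(S\vec u_m)^*S\vec w_m\neq0$ is unnecessary, and as written it is circular, since it uses $\vec w_m\in\spn(V_{m+1})$, which itself requires solvability. The paper absorbs this into hypothesis (i). Indeed, any exact solution $\vec t_m\perp_S\vec u_m$ of \eqref{eq:rce} satisfies $\vec u_m+\vec t_m=\alpha\vec w_m$ with $1=\alpha\,(S\vec u_m)^*S\vec w_m$, so the condition holds automatically.
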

\begin{proof}
    We proceed by induction on $m$. Assuming $\rho_m < \rho_*$, we show that the angle converges quadratically and that $\rho_{m+1} < \rho_*$. The base case assumption $\rho_0 < \rho_*$ ($m = 0$) is satisfied by the hypothesis $(iv)$.

    Assume $\rho_m < \rho_*$. By \Cref{lem:sketchedRitzvalbnd} and using $c_m$, $s_m \leq 1$, 
    \begin{equation*}
        \abs{\lambda - {\theta}_m} \leq \frac{s_m}{1 - \epsilon} ( c_m\nu(\lambda)+(s_m+\epsilon)\norm{A-\lambda I} ) \leq \frac{s_m}{1-\epsilon} M_\lambda = \frac{s_m}{1-\epsilon} M_\lambda.
    \end{equation*}
    Now we show that the assumptions of \Cref{lem:sketchedinviteraccuracy} are satisfied. First, note that 
	\begin{equation*}
		\rho_* = \frac{1}{4 \kappa(\lambda) C_{RR}} \cdot \frac{(1-\epsilon)\eta}{4 M_\lambda C_{RR}} \le \frac{1}{4 \kappa(\lambda) C_{RR}}
	\end{equation*}
	since $\eta \le \norm{A - \lambda I} \le M_\lambda$ and $C_{RR}\geq 1$, and hence by \cref{lem:sketchedRitzvecbnd} we have
	\begin{equation*}
		\kappa(\lambda)s_m\leq \kappa(\lambda)C_{RR}\rho_m \leq \kappa(\lambda)C_{RR}\rho_*\leq \frac{\kappa(\lambda)C_{RR}}{4\kappa(\lambda)C_{RR}} = \frac{1}{4}<1,
	\end{equation*}
	In particular $s_m \leq 1/4$, so $c_m = \sqrt{1-s_m^2} > 1/2$. Next, using the fact that
	\begin{equation*}
		\rho_* = \frac{(1-\epsilon) \eta}{2 M_\lambda C_{RR}} \cdot \frac{1}{8 \kappa(\lambda) C_{RR}} \le \frac{(1-\epsilon) \eta}{2 M_\lambda C_{RR}},
	\end{equation*}
	we obtain
    \begin{equation*}
        \abs{\lambda - {\theta}_m} \leq \frac{s_m}{1-\epsilon} M_\lambda \leq \frac{M_\lambda C_{RR}\rho_*}{1-\epsilon} \leq \frac{\eta}{2}<\eta,
    \end{equation*} and finally, using $c_m - s_m\sqrt{\kappa(\lambda)^2-1} > c_m - \kappa(\lambda)s_m \geq \frac{1}{2} - \frac{1}{4} = \frac{1}{4}$, we have	
    \begin{align*}
        q_m &:= \frac{\kappa(\lambda)s_m}{\eta - |\lambda - {\theta}_m|} \cdot \frac{|\lambda - {\theta}_m|}{c_m - s_m \sqrt{\kappa(\lambda)^2-1}} \leq \frac{\kappa(\lambda)s_m}{\eta/2}\cdot\frac{M_\lambda s_m/(1-\epsilon)}{1/4}\\ 
		&< \frac{8\kappa(\lambda)M_\lambda}{(1-\epsilon)\eta} s_m^2 \leq \frac{1}{2C_{RR}^2 \rho_*} s_m^2 \leq \frac{1}{2 \rho_*} \rho_m^2.
    \end{align*}
	Now $q_m < \frac{1}{2 \rho_*} \rho_m^2 < \frac{1}{2\rho_*} \rho_*\rho_m \leq \frac{1}{2} < 1$ so by
    \Cref{lem:sketchedinviteraccuracy} we obtain $\rho_{m+1} \leq \sin\angle(\vec x_R,\vec w_m) \leq \frac{q_m}{1-q_m}$ where $\vec w_m = (A-{\theta}_mI)^{-1}{\vec u}_m \in \spn({V}_{m+1})$.
    Therefore,
    \begin{equation}
        \rho_{m+1} \leq \frac{q_m}{1-q_m} \leq 2q_m < \frac{\rho_m^2}{\rho_*}
    \end{equation} 
	and $\rho_{m+1} < \rho_m^2 / \rho_* < \rho_*\rho_m / \rho_* = \rho_m < \rho_*$, which completes the induction.
\end{proof}

\Cref{thm:convRJD} bounds the subspace angle $\rho_m = \sin\angle(\vec x_R,{V}_{m})$, the geometric quantity tracked by the analysis. In practice one observes the Ritz vector angle $s_m = \sin\angle(\vec x_R,{\vec u}_m)$ instead, since ${\vec u}_m$ is the computed approximation to $\vec x_R$ while ${V}_{m}$ is implicit. The next result phrases the convergence guarantee in terms of $s_m$.

\begin{corollary}[Local quadratic convergence in the Ritz vector] \label{cor:convRitzvec}
    Under the hypotheses of \Cref{thm:convRJD}, the Ritz vector angle $s_m = \sin\angle(\vec x_R, {\vec u}_m)$ converges to zero quadratically by
    \begin{equation}
        s_{m+1} \leq L s_m^2, \quad L = \frac{1}{C_{RR} \rho_*}.
    \end{equation}
\end{corollary}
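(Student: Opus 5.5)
The plan is to combine the Ritz vector bound of \Cref{lem:sketchedRitzvecbnd} at step $m+1$ with the intermediate estimate on $q_m$ from the proof of \Cref{thm:convRJD}, written in terms of $s_m$ rather than $\rho_m$. The obvious route, $s_{m+1}\le C_{RR}\rho_{m+1} < C_{RR}\rho_m^2/\rho_*$ followed by $\rho_m\le s_m$, only gives the constant $C_{RR}/\rho_*$. That is too weak by a factor $C_{RR}^2$, so we need to keep the sharper bound that the proof of \Cref{thm:convRJD} obtains before it passes from $s_m$ to $\rho_m$.

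\emph{Step 1 (hypotheses persist).} By \Cref{thm:convRJD}, $\rho_m<\rho_*$ for every $m$. So all hypotheses used inside its inductive step hold at every $m$, and so do those of \Cref{lem:sketchedRitzvecbnd}:
\begin{itemize}
\item $\kappa(\lambda)s_m\le 1/4$;
\item $|\lambda-\theta_m|\le\eta/2$;
\item $c_m-s_m\sqrt{\kappa(\lambda)^2-1}>1/4$.
\end{itemize}

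\emph{Step 2 (bound $q_m$ by $s_m^2$).} Repeating the chain in that proof and stopping before the last inequality gives
\begin{equation*}
q_m<\frac{8\kappa(\lambda)M_\lambda}{(1-\epsilon)\eta}\,s_m^2=\frac{s_m^2}{2C_{RR}^2\rho_*},
\end{equation*}
where the equality is the definition of $\rho_*$. We also have $q_m<1/2$, as shown there. Then \Cref{lem:sketchedinviteraccuracy} and $\vec w_m\in\spn(V_{m+1})$ give
\begin{equation*}
\rho_{m+1}\le\frac{q_m}{1-q_m}\le 2q_m<\frac{s_m^2}{C_{RR}^2\rho_*}.
\end{equation*}

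\emph{Step 3 (back to the Ritz vector).} Apply \Cref{lem:sketchedRitzvecbnd} at index $m+1$. This is legitimate because $S$ is an $\epsilon$-embedding for $\spn([V_{m+1},AV_{m+1}])$ and uniform separation holds for all $m$. It gives $s_{m+1}\le C_{RR}\rho_{m+1}<s_m^2/(C_{RR}\rho_*)=Ls_m^2$.

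The only real obstacle is recognizing that the bound in terms of $s_m$ must be extracted before the step $s_m\le C_{RR}\rho_m$ used in \Cref{thm:convRJD}. After that, everything is bookkeeping of constants already established.

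For completeness, note also that $\rho_m\le s_m$, since $\vec u_m\in\spn(V_m)$. Hence $s_m\to0$ is consistent with $\rho_m\to 0$, and convergence of $s_m$ is quadratic.
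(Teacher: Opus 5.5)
Your proposal is correct and follows the paper's proof. Like the paper, you keep the intermediate estimate $\rho_{m+1}\le 2q_m\le s_m^2/(C_{RR}^2\rho_*)$ from the proof of \Cref{thm:convRJD}, before $s_m$ is replaced by $C_{RR}\rho_m$, and then apply \Cref{lem:sketchedRitzvecbnd} at index $m+1$ to get $s_{m+1}\le C_{RR}\rho_{m+1}\le L s_m^2$.
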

\begin{proof}
    From the proof of \Cref{thm:convRJD}, $\rho_{m+1} \leq 2q_m \leq \frac{1}{C_{RR}^2 \rho_*}s_m^2$. Therefore, by \Cref{lem:sketchedRitzvecbnd}
    \begin{equation}
        s_{m+1} \leq C_{RR}\rho_{m+1} \leq C_{RR} \frac{1}{C_{RR}^2 \rho_*} s_m^2 =  Ls_m^2.
    \end{equation}
\end{proof}

We close this section with an observation on the normal case. This is not the setting at which the present work is aimed, but it makes clear what effect sketching has when the problem does have structure.

\begin{remark}[Normal and Hermitian $A$] \label{remark:normalcase}
    Let $A$ be normal. Then the left and right eigenvectors coincide, so $\kappa(\lambda) = 1$ and $\nu(\lambda) = 0$, and the separation reduces to the spectral gap $\eta = \gap_\lambda:=\min\limits_{\sigma \in \lambda(A), \sigma \neq \lambda}|\sigma - \lambda|$. Following the proof of \Cref{thm:convRJD} with these values, and using \Cref{remark:inviternormal} in place of \Cref{lem:sketchedinviteraccuracy}, the contraction becomes
    \begin{equation*}
        \rho_{m+1} \leq \frac{4\norm{A - \lambda I}C_{RR}^2}{(1-\epsilon)\gap_{\lambda}}(\epsilon \rho_m^2 + C_{RR}\rho_m^3), \quad s_{m+1} \leq \frac{4\norm{A- \lambda I}C_{RR}}{(1-\epsilon)\gap_{\lambda}} (\epsilon s_m^2 + s_m^3),
    \end{equation*} for sufficiently small $\rho_m$ (and $s_m$), which is quadratic for a fixed $\epsilon$. The quadratic term comes from the $\epsilon$ contribution to the sketched Rayleigh quotient in \Cref{lem:sketchedRitzvalbnd}. It disappears if the shift used in the residual and in the correction equation~\eqref{eq:rce} is the standard Rayleigh quotient $\widetilde{\theta}_m$ of $\vec u_m$, which satisfies $|\lambda - \widetilde{\theta}_m| \leq \norm{A - \lambda I} s_m^2$ when $\nu(\lambda) = 0$. \Cref{thm:ceequalsinviter} allows an arbitrary shift and \Cref{lem:sketchedRitzvecbnd} does not involve the shift, so the same argument gives local cubic convergence,
    \begin{equation*}
        \rho_{m+1} \leq \frac{4\norm{A-\lambda I}C_{RR}^3}{\gap_\lambda}\rho_m^3, \quad s_{m+1} \leq \frac{4\norm{A-\lambda I}C_{RR}}{\gap_\lambda} s_m^3,
    \end{equation*} at a cost of two extra inner products of length $n$ per iteration, since $A\vec u_m = W\vec y_m$ is already available. Note that with this shift $\vec r_m$ is no longer $S$-orthogonal to $\vec u_m$, which is the situation for which the projected right-hand side of \eqref{eq:rce} is needed.

    We nevertheless do not recommend \Cref{alg:rjd} for Hermitian $A$. Sketching destroys the symmetry of the extraction, because $H = (SV)^*(SAV)$ is not Hermitian even when $A$ is. The projected problem is then a non-Hermitian eigenvalue problem, which is several times more expensive than the Hermitian one of the same size, so the saving in the orthogonalization is outweighed. One could instead keep the sketch-orthonormal basis and extract from the Hermitian definite pencil $V^*AV \vec y = \theta V^*V\vec y$, which restores the structure of the projected problem; but maintaining $V^*AV$ and $V^*V$ reintroduces the length-$n$ inner products that sketching was introduced to avoid. For Hermitian problems this second option is preferable, and it is implemented for CPUs and GPUs in~\cite{Gubleretal26}, where the saving in the orthogonalization outweighs the cost of maintaining $V^*AV$ and $V^*V$. Our interest is thus in the non-Hermitian case, where no such structure is available and where, by \Cref{thm:convRJD}, sketching costs nothing in the order of local convergence.
\end{remark}


\section{Harmonic and refined extraction} \label{sec:harmonicrefined}
Standard sketched extraction (\Cref{alg:rjd}) is reliable for exterior eigenvalues, where the uniform separation condition (\Cref{def:uniformseparation}) holds. For an interior target, it can fail both to select the correct value and to converge to the corresponding Ritz vector. Harmonic extraction (\Cref{subsec:harmonic}) addresses the value: it is Rayleigh-Ritz for a shift-invert operator in which the target is dominant, relocating the governing separation condition to a setting that is typically far more favorable for interior eigenvalues. Refined extraction (\Cref{subsec:refined}) addresses the vector, and removes the uniform separation condition. Combining the two (\Cref{subsec:harmonic+refined}) gives quadratic convergence for interior eigenvalues, with all the analysis inherited from \Cref{sec:analysis}. Throughout, $V_m$ is the sketch-orthonormal search basis, $\rho_m = \sin\angle(\vec x_R,V_m)$, and $(\lambda,\vec x_R, \vec x_L), \kappa(\lambda),A_{22}, \eta = \sep(\lambda, A_{22}),X_\perp, M_\lambda, K_\epsilon$ are as in \Cref{sec:analysis}.

\subsection{Harmonic extraction} \label{subsec:harmonic}
Given a target $\tau \in \C$, harmonic Rayleigh-Ritz \cite{Morgan91,MorganZeng98} enforces $(A- \theta I)\vec u \perp (A-\tau I)V_m$ for $\vec u = V_m \vec y$ \cite[Thm.~2.3]{MorganZeng98}. Sketching the inner product and writing $A - \theta I = (A- \tau I) - (\theta-\tau )I$ gives the $m\times m$ generalized eigenvalue problem
\begin{equation} \label{eq:sketchedharmonic}
    (S(A - \tau I)V_m)^*(S(A-\tau I)V_m) \vec y = (\theta - \tau)(S(A-\tau I)V_m)^*(SV_m) \vec y.
\end{equation}
The pencil costs $\order(sm^2)$ to assemble and $\order(m^3)$ to solve with no new products with $A$; as $\spn((A-\tau I)V_m) \subseteq \spn([V_m,AV_m])$, the sketch $S$ of \Cref{alg:rjd} already embeds it. Moreover $S(A-\tau I)V_m = B-\tau Q$ in the notation of \Cref{alg:rjd}, so the pencil is assembled from quantities that are already stored and no new sketch is required. Since $(A-\tau I)^{-1}$ sends the eigenvalues nearest $\tau$ to the exterior of its spectrum, the wanted interior eigenvalue becomes dominant, which is where the extraction is reliable \cite{MorganZeng98}. The extraction, however, still depends on a separation condition, now applied to the transformed pencil.

\begin{lemma} \label{lem:sketchedharmonic}
    Let $S$ be an $\epsilon$-subspace embedding for $\spn((A-\tau I) V_m)$, and let $\theta \neq \tau$. Then $(\theta, \vec u = V_m \vec y)$ solves \eqref{eq:sketchedharmonic} if and only if $(\mu, \vec y)$ with $\mu = (\theta - \tau)^{-1}$ is a sketched Rayleigh-Ritz eigenpair of $(A-\tau I)^{-1}$ on $\spn((A-\tau I)V_m)$, with Ritz vector $\widehat{\vec u} = (A-\tau I)V_m\vec y$. 
\end{lemma}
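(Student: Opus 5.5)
The plan is to make precise what ``sketched Rayleigh--Ritz eigenpair of $(A-\tau I)^{-1}$ on $\spn((A-\tau I)V_m)$'' means, and then show that the two generalized eigenvalue problems are the same equation up to rescaling by $\mu$. We tacitly assume $\tau\notin\lambda(A)$, which is needed for $(A-\tau I)^{-1}$ to exist.

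Set $Z := (A-\tau I)V_m$ and $B := (A-\tau I)^{-1}$. The sketched Galerkin condition, as in \Cref{sec:rjd}, asks for $\widehat{\vec u} = Z\vec y$ and $\mu$ such that $B\widehat{\vec u} - \mu \widehat{\vec u} \perp_S \spn(Z)$. Because $Z$ is not sketch-orthonormal, this condition reads
\begin{equation*}
(SZ)^*(SBZ)\vec y = \mu\,(SZ)^*(SZ)\vec y .
\end{equation*}
The key simplification is $BZ = V_m$, so the condition becomes $(SZ)^*(SV_m)\vec y = \mu\,(SZ)^*(SZ)\vec y$. Assembled from stored quantities, this is $(B-\tau Q)^*Q\,\vec y = \mu\,(B-\tau Q)^*(B-\tau Q)\vec y$ in the notation of \Cref{alg:rjd}. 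I would remark that this is equivalent to the standard form obtained by first sketch-orthonormalizing $Z$, since a change of basis by an invertible $R$ preserves the eigenvalues and maps eigenvectors accordingly. That justifies calling it the sketched Rayleigh--Ritz problem even without an $S$-orthonormal basis.

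Next, I would record that the Gram matrix $G := (SZ)^*(SZ)$ is Hermitian positive definite. The matrix $Z$ has full column rank because $A-\tau I$ is nonsingular and $V_m$ has full column rank; the latter holds because $V_m$ is sketch-orthonormal and hence well conditioned, as shown in \Cref{subsec:sketching}. The $\epsilon$-embedding property \eqref{eq:subemb} on $\spn(Z)$ then gives $\norm{SZ\vec y}^2 \ge (1-\epsilon)\norm{Z\vec y}^2 > 0$ for every $\vec y\neq 0$. This positivity is the only nontrivial ingredient.

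With this in hand, both directions are short algebra.
\begin{itemize}
\item Forward direction: if $(\theta,\vec y)$ solves \eqref{eq:sketchedharmonic} with $\theta\neq\tau$, multiply by $\mu=(\theta-\tau)^{-1}$. This gives $\mu G\vec y = (SZ)^*(SV_m)\vec y$, which is exactly the Galerkin condition above.
\item Reverse direction: if $(\mu,\vec y)$ with $\vec y\neq 0$ satisfies the Galerkin condition, then $\mu\neq 0$. Otherwise we would get $(SZ)^*(SV_m)\vec y=0$, and this step needs care, because $\mu=0$ would not directly contradict the positivity of $G$. The cleanest way to handle it is to take the stated correspondence $\mu=(\theta-\tau)^{-1}$ as the hypothesis: the statement only claims the equivalence for pairs with $\mu$ of this form, and such $\mu$ is automatically nonzero. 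Then setting $\theta=\tau+\mu^{-1}$ and dividing by $\mu$ recovers \eqref{eq:sketchedharmonic}. As a sanity check, $\mu G\vec y\neq 0$ whenever $\mu\neq0$, by the positivity of $G$, so the eigenvector is not degenerate.
\end{itemize}
Finally, I would identify the Ritz vector as $\widehat{\vec u}=Z\vec y=(A-\tau I)V_m\vec y=(A-\tau I)\vec u$.

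The main obstacle is therefore not computational. It lies in fixing the definition of the sketched Rayleigh--Ritz pair for a non-orthonormal basis, and in handling the degenerate case $\mu=0$, which corresponds to $\theta=\infty$. I would treat that case by restricting attention to finite $\theta\neq\tau$, as the statement does.
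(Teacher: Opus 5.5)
Your proposal is correct and follows the paper's own one-line proof: write the sketched Galerkin condition for $(A-\tau I)^{-1}$ on $\spn((A-\tau I)V_m)$, use $(A-\tau I)^{-1}(A-\tau I)V_m = V_m$, and rescale by $\mu=(\theta-\tau)^{-1}$ to recover \eqref{eq:sketchedharmonic}. Your extra remarks are sound but not needed for the algebraic equivalence itself. These are the positive definiteness of $(SZ)^*(SZ)$, the excluded case $\mu=0$, and the implicit assumption $\tau\notin\lambda(A)$. Two small points: full column rank of $V_m$ follows directly from $(SV_m)^*(SV_m)=I$, with no embedding argument needed, and your symbol $B=(A-\tau I)^{-1}$ clashes with $B=SAV_m$ in the paper's algorithm.
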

\begin{proof}
    The sketched Rayleigh-Ritz condition for $(A-\tau I)^{-1}$ on $\spn((A-\tau I)V_m)$ is $(S(A-\tau I)V_m)^*S((A-\tau I)^{-1} - \mu I)(A-\tau I)V_m \vec y = 0$, which is \eqref{eq:sketchedharmonic} with $\mu = (\theta - \tau)^{-1}$.
\end{proof}

By \Cref{lem:sketchedharmonic}, the bounds of \Cref{sec:analysis} apply to $(A-\tau I)^{-1}$ on $\spn((A-\tau I)V_m)$, whose target eigenvalue is $\mu_* = (\lambda - \tau)^{-1}$ with the same right eigenvector $\vec x_R$.

\begin{theorem}
    Let $S$ be an $\epsilon$-subspace embedding for $\spn([V_m,AV_m])$, let $\tau \notin \lambda(A)$, and let $\theta_m$ be the harmonic Ritz value from \eqref{eq:sketchedharmonic} targeting $\lambda$, with $\mu_m = (\theta_m-\tau)^{-1}$ and $\norm{\widehat{\vec u}_m}_S=1$. Assume that the uniform separation condition holds with constant $\beta_R$ for the Rayleigh--Ritz projection of $(A-\tau I)^{-1}$ onto $\spn((A-\tau I)V_m)$, represented in an $S$-orthonormal basis, and that $\abs{\mu_m - \mu_*} \leq |\mu_*|/2$. Then
    \begin{equation} \label{eq:harmonicvaluebnd}
        |\theta_m - \lambda| \leq L_\theta \rho_m, \quad L_\theta = \frac{2\kappa_2(A-\tau I)C_RM_\lambda}{1-\epsilon},
    \end{equation} where $C_R = \sqrt{1+K_\epsilon \norm{(A-\tau I)^{-1} - \mu_* I}^2/\beta_R^2}$ and $M_\lambda = \nu(\lambda)+(1+\epsilon) \norm{A-\lambda I}$.
\end{theorem}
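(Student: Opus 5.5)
The plan is to use \Cref{lem:sketchedharmonic} to recast the harmonic extraction as a sketched Rayleigh--Ritz extraction for $B := (A-\tau I)^{-1}$ on the trial space $\mathcal W_m := \spn((A-\tau I)V_m)$. The target eigenpair of $B$ is $(\mu_*, \vec x_R)$ with $\mu_* = (\lambda-\tau)^{-1}$, and the Ritz vector is $\widehat{\vec u}_m$. I will then apply \Cref{lem:sketchedRitzvalbnd} and \Cref{lem:sketchedRitzvecbnd} to $B$ instead of $A$, and translate back from $\mu$ to $\theta$.

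\emph{Step 1 (embedding hypotheses).} First I check the embedding hypotheses for $B$. The space playing the role of $\spn([V_m,AV_m])$ is $\spn([\,(A-\tau I)V_m,\; B(A-\tau I)V_m\,]) = \spn([(A-\tau I)V_m, V_m]) \subseteq \spn([V_m,AV_m])$, so $S$ is an $\epsilon$-embedding there. The separation hypothesis with constant $\beta_R$ is assumed directly in an $S$-orthonormal basis of $\mathcal W_m$. Since $\norm{\widehat{\vec u}_m}_S = 1$, both lemmas apply verbatim. This gives
\begin{equation*}
  |\mu_m-\mu_*| \le \frac{\hat s_m}{1-\epsilon}\bigl(\nu_B + (1+\epsilon)\norm{B-\mu_* I}\bigr), \qquad \hat s_m \le C_R\,\hat\rho_m ,
\end{equation*}
where $\hat s_m = \sin\angle(\vec x_R,\widehat{\vec u}_m)$, $\hat\rho_m = \sin\angle(\vec x_R,\mathcal W_m)$ and $\nu_B = \norm{(B^*-\bar\mu_* I)\vec x_R}$. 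In the first bound I have used $c\le 1$ and $\hat s_m+\epsilon\le 1+\epsilon$.

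\emph{Step 2 (express $B$-quantities through $A$).} Next I rewrite the $B$-quantities in terms of $A$. The identity
\begin{equation*}
  B-\mu_* I = -(\lambda-\tau)^{-1}(A-\tau I)^{-1}(A-\lambda I)
\end{equation*}
and its adjoint give $\norm{B-\mu_*I} \le \norm{(A-\tau I)^{-1}}\,\norm{A-\lambda I}/|\lambda-\tau|$ and $\nu_B \le \norm{(A-\tau I)^{-1}}\,\nu(\lambda)/|\lambda-\tau|$. Hence
\begin{equation*}
  |\mu_m-\mu_*| \le \frac{\norm{(A-\tau I)^{-1}}\,M_\lambda}{(1-\epsilon)|\lambda-\tau|}\,\hat s_m .
\end{equation*}

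\emph{Step 3 (from $\mu$ back to $\theta$).} We have $\theta_m-\lambda = \mu_m^{-1}-\mu_*^{-1} = (\mu_*-\mu_m)/(\mu_m\mu_*)$. The hypothesis $|\mu_m-\mu_*|\le|\mu_*|/2$ gives $|\mu_m|\ge|\mu_*|/2$, and therefore $|\theta_m-\lambda| \le 2|\lambda-\tau|^2\,|\mu_m-\mu_*|$.

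\emph{Step 4 (relate $\hat\rho_m$ to $\rho_m$).} Let $V_m\vec c$ be the best approximation of $\vec x_R$ from $\spn(V_m)$. The candidate $(A-\tau I)V_m\vec c/(\lambda-\tau) \in \mathcal W_m$ satisfies
\begin{equation*}
  \vec x_R - \frac{(A-\tau I)V_m\vec c}{\lambda-\tau} = \frac{(A-\tau I)(\vec x_R - V_m\vec c)}{\lambda-\tau},
\end{equation*}
so $\hat\rho_m \le \norm{A-\tau I}\,\rho_m/|\lambda-\tau|$.

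\emph{Conclusion.} Chaining Steps 1--4, the factors of $|\lambda-\tau|$ cancel exactly. The result is $|\theta_m-\lambda| \le 2\norm{A-\tau I}\,\norm{(A-\tau I)^{-1}}\,C_R M_\lambda\,\rho_m/(1-\epsilon)$, which is \eqref{eq:harmonicvaluebnd}.

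The main obstacle is Step 1, namely the bookkeeping needed to make the transfer of the two lemmas to $B$ legitimate. \Cref{lem:sketchedRitzvecbnd} was stated for a sketch-orthonormal basis $V_m$ and the Galerkin matrix $H_m$, whereas here the natural basis $(A-\tau I)V_m$ is not $S$-orthonormal. So I would first pass to an $S$-orthonormal basis $\widehat V_m$ of $\mathcal W_m$, for example via a thin QR of $S(A-\tau I)V_m$. I would then verify that the Ritz vector and the separation constant $\beta_R$ refer to that representation, which is exactly how the hypothesis is phrased. The remaining steps are direct computations.
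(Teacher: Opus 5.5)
Your proposal is correct and follows essentially the same route as the paper: recast the harmonic extraction as sketched Rayleigh--Ritz for $(A-\tau I)^{-1}$ on $\spn((A-\tau I)V_m)$, check the embedding via $\spn([(A-\tau I)V_m,V_m])\subseteq\spn([V_m,AV_m])$, and bound $\nu$ and $\norm{(A-\tau I)^{-1}-\mu_* I}$ through $(A-\tau I)^{-1}-\mu_*I=-\mu_*(A-\tau I)^{-1}(A-\lambda I)$; you then transfer the subspace angle with the candidate $\mu_*(A-\tau I)V_m\vec c$ and convert back using $\abs{\mu_m}\geq\abs{\mu_*}/2$. The one point left implicit in your Step 2 is that the adjoint bound on $\nu_B$ needs $(A^*-\bar\lambda I)$ and $(A^*-\bar\tau I)^{-1}$ to commute, which the paper states explicitly.
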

\begin{proof}
    Since $\spn([(A-\tau I)V_m,(A-\tau I)^{-1}((A-\tau I)V_m)]) \subseteq \spn([V_m,AV_m])$, the sketch $S$ embeds the space required by \Cref{lem:sketchedRitzvalbnd,lem:sketchedRitzvecbnd} applied to $(A-\tau I)^{-1}$ on $\spn((A-\tau I)V_m)$. From $(A-\tau I)^{-1} - \mu_*I = -\mu_*(A-\tau I)^{-1}(A-\lambda I)$ we get $\norm{(A-\tau I)^{-1} - \mu_* I} \leq |\mu_*|\norm{(A-\tau I)^{-1}}\norm{A-\lambda I}$ and, using that $(A^*-\bar{\lambda}I)$ and $(A^*-\bar{\tau}I)^{-1}$ commute, $\nu_{(A-\tau I)^{-1}}(\mu_*) = |\mu_*|\norm{(A^*-\bar{\tau}I)^{-1}(A^*-\bar{\lambda}I) \vec x_R} \leq |\mu_*|\norm{(A-\tau I)^{-1}}\nu(\lambda)$; hence
    \begin{equation*}
        \nu_{(A-\tau I)^{-1}}(\mu_*)+(1+\epsilon)\norm{(A-\tau I)^{-1} - \mu_*I} \leq |\mu_*|\norm{(A-\tau I)^{-1}}M_{\lambda}.
    \end{equation*}
    Next, if $\vec v \in \spn(V_m)$ attains $\norm{\vec x_R - \vec v} = \rho_m$, then $\mu_*(A-\tau I)\vec v \in \spn((A-\tau I)V_m)$ and $\norm{\vec x_R - \mu_*(A-\tau I)\vec v}= |\mu_*|\norm{(A-\tau I)(\vec x_R - \vec v)}$, so $\sin\angle(\vec x_R, (A-\tau I)V_m) \leq |\mu_*|\norm{A-\tau I}\rho_m$ and \Cref{lem:sketchedRitzvecbnd} gives $\sin\angle(\vec x_R, \widehat{\vec u}_m)\leq C_R \abs{\mu_*}\norm{A-\tau I}\rho_m$. Combining these in \Cref{lem:sketchedRitzvalbnd}, and using $c,s\leq 1$ there,
    \begin{equation*}
        \abs{\mu_m-\mu_*} \leq \frac{\abs{\mu_*}^2 \kappa_2(A-\tau I)C_R M_{\lambda}}{1-\epsilon}\rho_m.
    \end{equation*} Finally $\abs{\theta_m-\lambda} = \abs{\mu_m - \mu_*}/(\abs{\mu_m}\abs{\mu_*})$, and the hypothesis $\abs{\mu_m-\mu_*}\leq |\mu_*|/2$ gives $\abs{\mu_m} \geq \abs{\mu_*}/2$, so $\abs{\theta_m - \lambda} \leq 2\frac{\abs{\mu_m-\mu_*}}{\abs{\mu_*}^2}$, which completes the proof.
\end{proof}

The hypothesis $\abs{\mu_m - \mu_*}\leq \abs{\mu_*}/2$ is a locality assumption of the same nature as $\rho_0 \leq \rho_*$ in \Cref{thm:convRJD}, and holds as soon as $\rho_m \leq (1-\epsilon)/(2\abs{\mu_*}\kappa_2(A-\tau I)C_RM_{\lambda})$. The harmonic value is thus as accurate as the standard Rayleigh quotient, up to constants reflecting the shift-invert transformation, the factor $\kappa_2(A-\tau I)$ being the price of that transformation. The harmonic vector, however, requires an additional separation requirement that makes extraction more fragile \cite{Wu2017}. We therefore obtain the vector by refinement instead.

\subsection{Refined extraction} \label{subsec:refined}
The refined Ritz vector \cite{Jia97, JiaStewart01} is the unit vector in the search subspace with smallest residual at a target $\theta_m$; its sketched analogue replaces the residual by its sketched norm, 
\begin{equation} \label{eq:sketchedrefined}
    \vec u_m^{\mathrm{ref}} = V_m \vec c_m, \quad \vec c_m = \argmin_{\norm{\vec c} = 1} \norm{S(A-\theta_mI)V_m \vec c},
\end{equation} so $\vec c_m$ is the trailing right singular vector of $S(A-\theta_m I)V_m$, computed in $\order(sm^2)$ by a thin SVD; as in \eqref{eq:sketchedharmonic}, $S(A-\theta_m I)V_m = B-\theta_m Q$ is available from stored quantities. Approximating the right singular vectors through a sketch is studied in \cite{gilbert2013sketched,pn23nullspace}.

\begin{lemma} \label{lem:sketchedrefinedconv}
Let $(\lambda, \vec x_R)$ be simple, and suppose $\abs{\lambda - \theta_m} < \sigma_{\min}(A_{22} - \theta_m I)$ where $A_{22}$ is the deflated block of $A$ at $\lambda$ as in \Cref{sec:analysis}. If $V_m$ is sketch-orthonormal and $S$ is an $\epsilon$-subspace embedding for $\spn([V_m,AV_m])$, then 
    \begin{equation} \label{eq:sketchedrefinedbnd}
        \sin\angle(\vec x_R, \vec u_m^{\mathrm{ref}}) \leq \frac{1+\epsilon}{\sqrt{1-\epsilon}} \kappa_2([\vec x_R, X_\perp])\frac{\min\limits_{\norm{\vec c} = 1}\norm{(A-\theta_m I )V_m \vec c}}{\sigma_{\min}(A_{22} - \theta_m I)},
    \end{equation} so $\vec u_m^{\mathrm{ref}}\rightarrow \vec x_R$ whenever the minimal residual does, with no uniform separation condition.
\end{lemma}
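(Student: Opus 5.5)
The plan is to split the argument into two parts. First, a sketching step transfers the minimal sketched residual of \eqref{eq:sketchedrefined} to a bound on the true residual of $\vec u_m^{\mathrm{ref}}$. Second, a deterministic step, using the block diagonalization $A = [\vec x_R, X_\perp]\diag(\lambda, A_{22})[\vec x_R,X_\perp]^{-1}$ from \Cref{sec:analysis}, turns a small residual at the shift $\theta_m$ into a small angle with $\vec x_R$. Write $T := [\vec x_R, X_\perp]$ and $\vec u := \vec u_m^{\mathrm{ref}} = V_m\vec c_m$ with $\norm{\vec c_m}=1$.

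\emph{Sketching step.} Since $V_m$ is sketch-orthonormal, we have $\norm{\vec u}_S = \norm{\vec c_m} = 1$. Because $\vec u\in\spn(V_m)$, \eqref{eq:subemb} gives $\norm{\vec u}\geq (1+\epsilon)^{-1/2}$.
\begin{itemize}
\item Let $\vec c_\star$ be a unit minimizer of the unsketched problem $\min_{\norm{\vec c}=1}\norm{(A-\theta_m I)V_m\vec c}$. Every vector $(A-\theta_m I)V_m\vec c$ lies in $\spn([V_m,AV_m])$, where $S$ is an $\epsilon$-embedding. By the optimality of $\vec c_m$ in \eqref{eq:sketchedrefined},
\begin{equation*}
\norm{(A-\theta_m I)\vec u} \leq (1-\epsilon)^{-1/2}\norm{S(A-\theta_m I)V_m\vec c_m} \leq (1-\epsilon)^{-1/2}\norm{S(A-\theta_m I)V_m\vec c_\star} \leq \sqrt{\tfrac{1+\epsilon}{1-\epsilon}}\,\min_{\norm{\vec c}=1}\norm{(A-\theta_m I)V_m\vec c}.
\end{equation*}
\item Dividing by $\norm{\vec u}\geq(1+\epsilon)^{-1/2}$ then gives
\begin{equation*}
\frac{\norm{(A-\theta_m I)\vec u}}{\norm{\vec u}} \leq \frac{1+\epsilon}{\sqrt{1-\epsilon}}\,\min_{\norm{\vec c}=1}\norm{(A-\theta_m I)V_m\vec c}.
\end{equation*}
\end{itemize}
This reproduces exactly the constant in \eqref{eq:sketchedrefinedbnd}.

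\emph{Deterministic step.} Write $\vec u = a\vec x_R + X_\perp\vec z$, that is, $[a;\vec z] = T^{-1}\vec u$. Since $AX_\perp = X_\perp A_{22}$,
\begin{equation*}
(A-\theta_m I)\vec u = T\begin{bmatrix} a(\lambda-\theta_m)\\ (A_{22}-\theta_m I)\vec z\end{bmatrix}.
\end{equation*}
This yields $\norm{(A_{22}-\theta_m I)\vec z}\leq \norm{T^{-1}}\norm{(A-\theta_m I)\vec u}$.
\begin{itemize}
\item The hypothesis $\abs{\lambda-\theta_m}<\sigma_{\min}(A_{22}-\theta_m I)$ guarantees in particular that $\sigma_{\min}(A_{22}-\theta_m I)>0$. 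Hence $\norm{\vec z}\leq \norm{T^{-1}}\norm{(A-\theta_m I)\vec u}/\sigma_{\min}(A_{22}-\theta_m I)$.
\item Taking $a\vec x_R$ as the comparison vector in \Cref{def:angles},
\begin{equation*}
\sin\angle(\vec x_R,\vec u) = \min_t \frac{\norm{\vec u - t\vec x_R}}{\norm{\vec u}} \leq \frac{\norm{X_\perp\vec z}}{\norm{\vec u}} = \frac{\norm{\vec z}}{\norm{\vec u}}.
\end{equation*}
Here we use the orthonormality of $X_\perp$, and the fact that the sine of the angle between two vectors is symmetric in them.
\item Finally, $\norm{T}\geq\norm{T\vec e_1} = \norm{\vec x_R}=1$, so $\norm{T^{-1}}\leq\kappa_2(T)$. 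Chaining these estimates with the sketching step gives \eqref{eq:sketchedrefinedbnd}.
\end{itemize}

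I expect the main obstacle to be the sketching step rather than the spectral one. One must check that every residual vector compared there, for both $\vec c_m$ and $\vec c_\star$, lies in the embedded subspace $\spn([V_m,AV_m])$. One must also keep the normalizations straight: $\vec u$ has unit sketched norm, not unit Euclidean norm, and this is exactly where the extra factor $\sqrt{1+\epsilon}$ enters. The final convergence claim is then immediate. If the minimal residual tends to zero, the right-hand side of \eqref{eq:sketchedrefinedbnd} tends to zero, provided $\sigma_{\min}(A_{22}-\theta_m I)$ stays bounded away from zero. That holds, for instance, when $\theta_m\to\lambda$, since then the quantity tends to $\eta>0$. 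No separation of $\lambda$ from the projected spectrum enters anywhere in the argument.
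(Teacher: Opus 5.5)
Your proof is correct and has the same two-step structure as the paper's. The paper obtains the deterministic bound by citing Jia and Stewart's Theorem~6.1 for $V_m\vec c_m/\norm{V_m\vec c_m}$, and the residual transfer $\norm{(A-\theta_m I)V_m\vec c_m}\leq\sqrt{(1+\epsilon)/(1-\epsilon)}\min_{\norm{\vec c}=1}\norm{(A-\theta_m I)V_m\vec c}$ by citing a remark on sketched nullspace problems, then divides by $\norm{V_m\vec c_m}\geq(1+\epsilon)^{-1/2}$ as you do. The difference is that you prove both cited facts inline, and your block-diagonalization argument is slightly sharper: it needs only $\sigma_{\min}(A_{22}-\theta_m I)>0$ and gives the factor $\norm{[\vec x_R,X_\perp]^{-1}}\leq\kappa_2([\vec x_R,X_\perp])$.
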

\begin{proof}
     Applying \cite[Thm~6.1]{JiaStewart01} to the unit vector $V_m \vec c_m / \norm{V_m \vec c_m}$, we obtain
    \begin{equation*}
         \sin\angle(\vec x_R, \vec u_m^{\mathrm{ref}}) \leq  \kappa_2([\vec x_R, X_\perp])\frac{\norm{(A-\theta_m I )V_m \vec c_m}}{\norm{V_m \vec c_m}\sigma_{\min}(A_{22} - \theta_m I)}.
    \end{equation*} Since $V_m$ is sketch-orthonormal, $\norm{V_m \vec c_m} \geq (1+\epsilon)^{-1/2}$. Moreover, by \cite[Remark~2.1]{pn23nullspace}, \begin{equation*}
        \norm{(A - \theta_m I)V_m \vec c_m} \leq \sqrt{\frac{1+\epsilon}{1-\epsilon}}\min\limits_{\norm{\vec c} = 1}\norm{(A-\theta_m I )V_m \vec c}.
    \end{equation*} Substituting these two bounds into the inequality yields the desired result.
\end{proof}

The right-hand side of \eqref{eq:sketchedrefinedbnd} is controlled by the subspace angle. Let $\vec v = V_m\vec d \in \spn(V_m)$ attain $\norm{\vec x_R - \vec v} = \rho_m$; taking $\vec c = \vec d/\norm{\vec d}$ in the minimum, and using $\norm{\vec d} \geq \sqrt{1-\epsilon}\norm{\vec v} = \sqrt{1-\epsilon}\sqrt{1-\rho_m^2}$ together with $(A-\theta_m I)\vec x_R = (\lambda - \theta_m)\vec x_R$, gives
\begin{equation} \label{eq:refinedminbnd}
    \min\limits_{\norm{\vec c} = 1}\norm{(A-\theta_m I )V_m \vec c} \leq \frac{(\norm{A-\lambda I} + \abs{\lambda - \theta_m})\rho_m + \abs{\lambda - \theta_m}}{\sqrt{1-\epsilon}\sqrt{1-\rho_m^2}}.
\end{equation}

\subsection{Refined harmonic extraction} \label{subsec:harmonic+refined}
For interior eigenvalues of strongly non-normal $A$, take $\theta_m$ to be the harmonic Ritz value from \eqref{eq:sketchedharmonic} and compute the refined vector $\vec u_m^{\mathrm{ref}}$ from \eqref{eq:sketchedrefined} at that shift, the sketched analogue of Jia's refined harmonic Rayleigh-Ritz \cite{Jia05}. The harmonic value keeps $\abs{\lambda - \theta_m} \leq L_\theta \rho_m$ accurate, and the refined vector is accurate with no uniform separation condition. Substituting $\abs{\lambda - \theta_m} \leq L_\theta \rho_m$ into \eqref{eq:refinedminbnd} and then into \eqref{eq:sketchedrefinedbnd}, and using $\sigma_{\min}(A_{22} - \theta_m I) \geq \eta - |\lambda - \theta_m|$ and $\kappa_2([\vec x_R, X_{\perp}]) \leq 2\kappa(\lambda)$, gives
\begin{equation*}
    \sin\angle(\vec x_R, \vec u_m^{\mathrm{ref}}) \leq C_{RH}\rho_m, \quad C_{RH} = \frac{5(1+\epsilon)\kappa(\lambda)(\norm{A-\lambda I}+2L_\theta)}{(1-\epsilon)\eta},
\end{equation*} whenever $\rho_m \leq 1/2$ and $\abs{\lambda - \theta_m}\leq \eta/2$, so that $\sqrt{1-\rho_m^2} \geq \sqrt{3}/2$ and $\sigma_{\min}(A_{22} - \theta_m I)\geq \eta/2$. Since the exact correction step is sketched inverse iteration (\Cref{thm:ceequalsinviter}), feeding this bound and \eqref{eq:harmonicvaluebnd} into the contraction of \Cref{lem:sketchedinviteraccuracy} makes the induction of \Cref{thm:convRJD} apply similarly, giving local quadratic convergence
\begin{equation*}
    \sin\angle(\vec x_R, V_{m+1}) \leq \frac{16\kappa(\lambda)L_\theta C_{RH}}{\eta} \sin^2\!\!\angle(\vec x_R, V_m),
\end{equation*} for interior eigenvalues. A separation condition is required only for the value through $L_\theta$, not for the vector. When $\tau$ is very close to $\lambda$, the factor $\kappa_2(A-\tau I)$ in $L_\theta$ becomes large and the harmonic value loses its guarantee. In that regime the Rayleigh quotient of the refined vector, $(\vec{u}_m^{\mathrm{ref}})^* A \vec{u}_m^{\mathrm{ref}}/\norm{\vec{u}_m^{\mathrm{ref}}}^2$, can be used instead as it converges to $\lambda$ once $\vec u_m^{\mathrm{ref}} \rightarrow \vec x_R$, regardless of $\abs{\lambda - \tau}$. This costs two extra inner products of length $n$ per iteration since $A\vec u_m^{\mathrm{ref}} = W\vec c_m$ is already available.

\section{Numerical Experiments} \label{sec:experiments}
We evaluate RJD (\Cref{alg:rjd}) against classical JD (\Cref{alg:jd}) on non-Hermitian eigenvalue problems drawn from the SuiteSparse Matrix Collection~\cite{SuiteSparse} together with synthetic matrices of prescribed structure. We report three experiments: order of convergence and the effect of inexact correction solves (\Cref{subsec:inexactce}), multiple eigenvalues on real large-scale problems (\Cref{subsec:multeig}), and extraction strategies for interior eigenvalues (\Cref{subsec:interioreigtest}). All experiments use the \texttt{sparsestack} sparse embedding sketching operator with $\zeta = 4$ nonzeros per column and sketch size $s = 4m_{\max}$. All experiments were performed in MATLAB version 2024b using double precision arithmetic.

\subsection{Inexact solves for correction equation and order of convergence} \label{subsec:inexactce}
\Cref{thm:convRJD} predicts local quadratic convergence for both methods when the correction equation is solved exactly. We confirm this on a controlled synthetic matrix $A = I+ G/\sqrt{n}$ with $n = 300$ and $G$ an $n\times n$ matrix with i.i.d. standard Gaussian entries, targeting the eigenvalue of largest real part with tolerance $10^{-10}$, restart dimensions $m_{\min} = 20$, $m_{\max} = 40$, and the correction equation solved exactly. The starting vector is a random Gaussian vector. \Cref{fig:ordconv} shows the resulting trajectory and fitted local order, $2.26$ for JD and $2.34$ for RJD, in agreement with \Cref{thm:convRJD}.

\begin{figure}[t]
  \centering
  \includegraphics[width=0.48\linewidth]{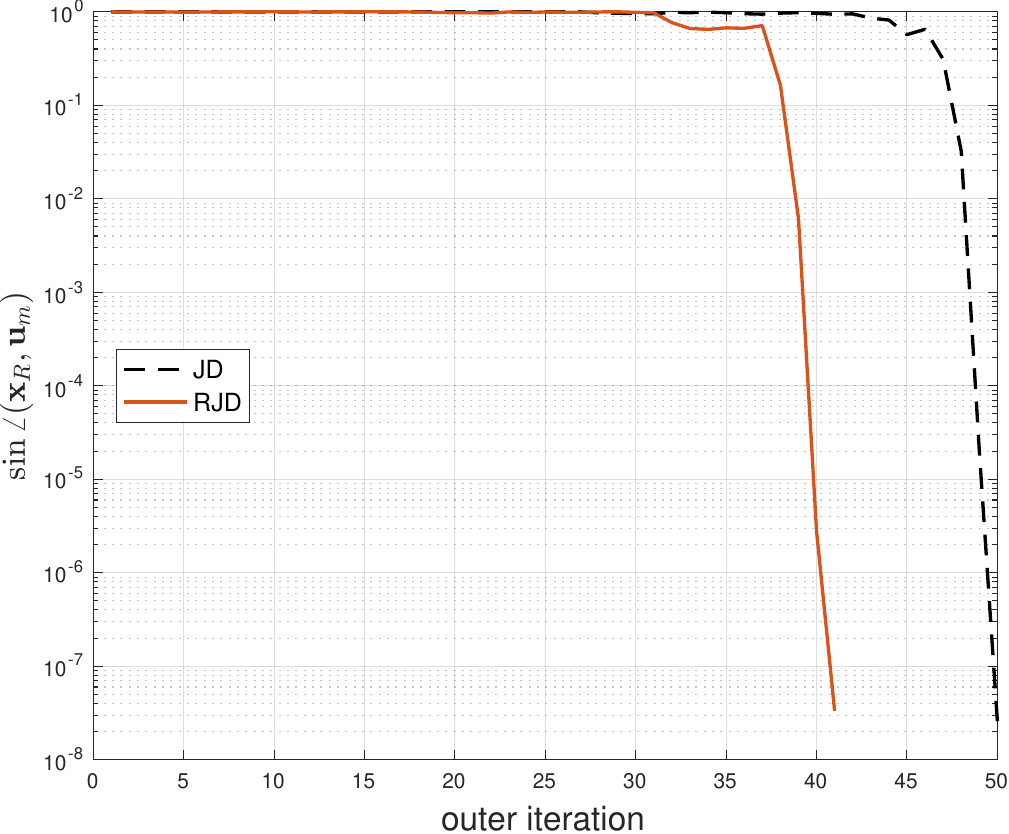}
  \hfill
  \includegraphics[width=0.48\linewidth]{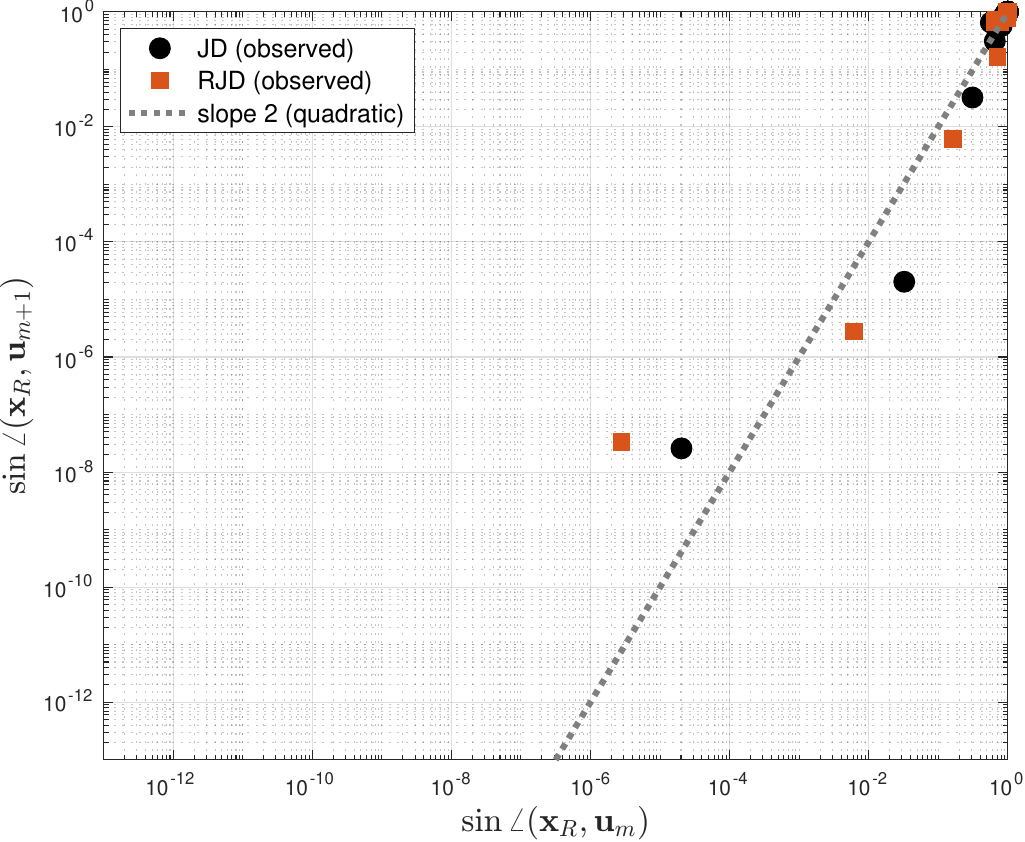}
  \caption{Eigenvector convergence on a synthetic problem ($n = 300$, exact correction equation solve). Left: $\sin\angle(\vec x_R, \vec u_m)$ against outer iteration, JD (dashed) and RJD (solid). Right: $\sin\angle(\vec x_R, \vec u_{m+1})$ against $\sin\angle(\vec x_R, \vec u_m)$ for both methods against the theoretical slope-$2$ reference line. The fitted local order is $2.26$ for JD and $2.34$ for RJD, consistent with \Cref{thm:convRJD}.}
  \label{fig:ordconv}
\end{figure}

We repeat the experiment on the real, non-symmetric matrix $\mathtt{garon2}$ from the SuiteSparse collection ($n = 13535$ with $373235$ nonzero entries), targeting the dominant eigenvalue $\lambda \approx 8.5451$ with tolerance $10^{-10}$ and restart dimensions $m_{\min} = 20$, $m_{\max} = 40$. The starting vector is warm-started by one step of power iteration on a Gaussian random vector, followed by normalization. The correction equation is solved either exactly or by GMRES with a small, fixed inner iteration budget of $5, 10$, or $20$ iterations. \Cref{fig:inexactsolves} shows that JD and RJD track each other closely at every level of correction accuracy, with one or the other ahead at different stages, but neither consistently better, and \Cref{table:ordconv} reports the corresponding fitted local order. At all three fixed budgets the fitted order is close to $1$ for both methods with an increase in order when the correction equation is solved more accurately; the exact solve produces a superlinear rate. This is consistent with the classical inexact Jacobi-Davidson analysis~\cite{BaiMiao17}: the convergence order degrades from quadratic toward linear as the correction solve is made less accurate. What matters here is that the degradation is a property of the correction accuracy and is shared by both methods, so it is not something sketching introduces. RJD following the curve of JD indicates that sketching does not cost local convergence order for nonsymmetric eigenvalue problems.

\begin{figure}[t]
  \centering
  \includegraphics[width=0.48\linewidth]{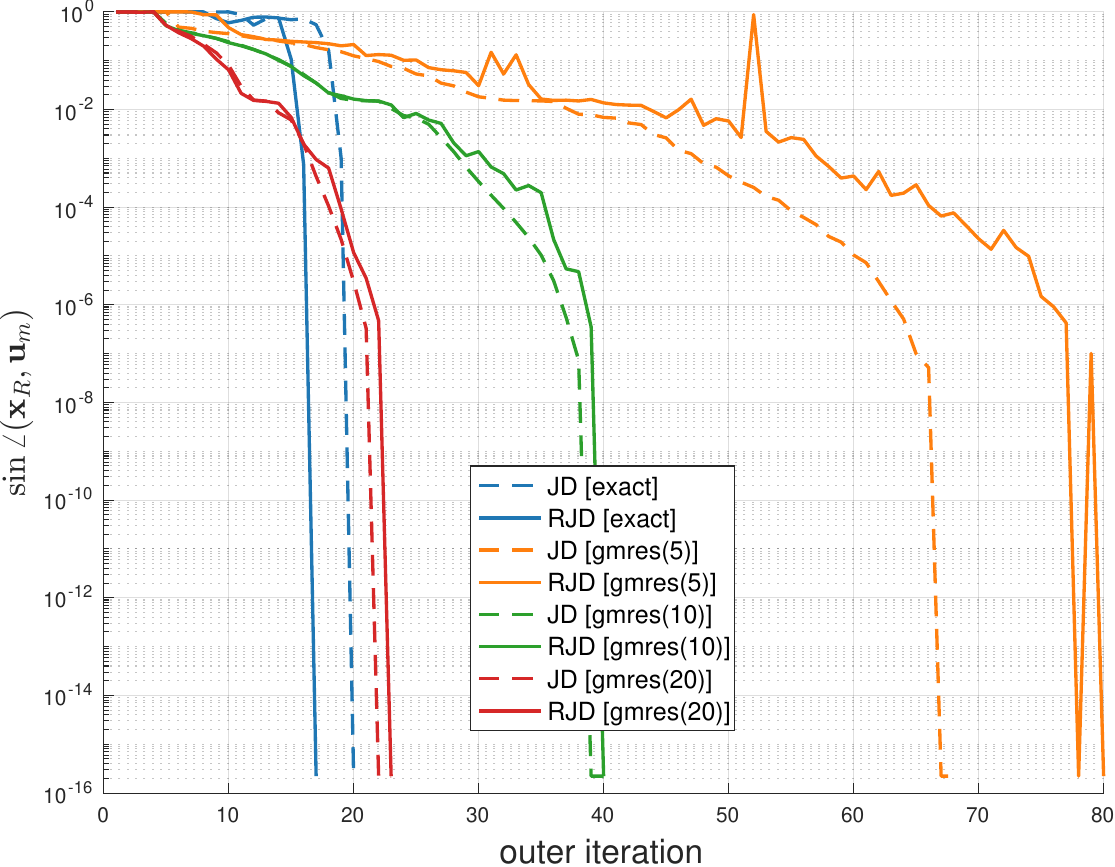}
  \hfill
  \includegraphics[width=0.48\linewidth]{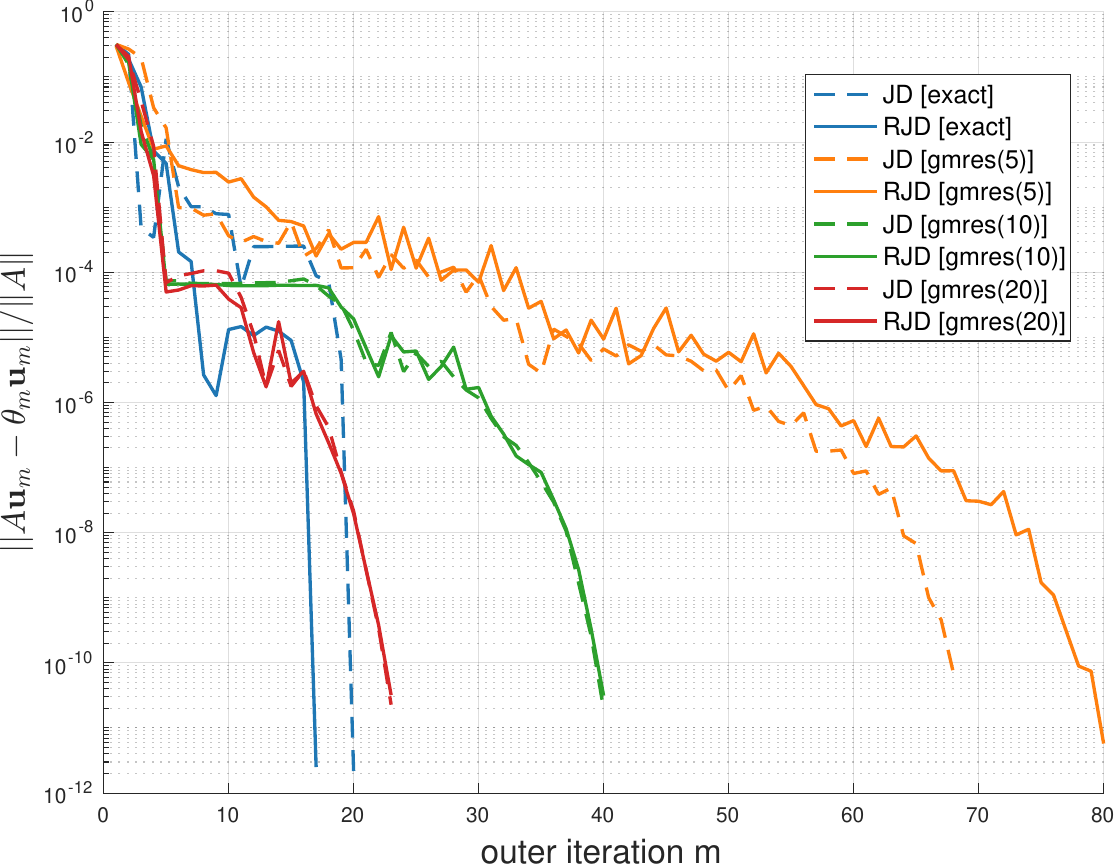}
  \caption{Convergence on the real, non-symmetric matrix $\mathtt{garon2}$ $(n = 13535)$, targeting the dominant eigenvalue $\lambda \approx 8.5451$, JD (dashed) and RJD (solid), for the correction equation solved exactly and by GMRES with a fixed inner budget of $5, 10,$ and $20$ iterations. Left: $\sin\angle(\vec x_R,\vec u_m)$ against outer iteration. Right: Residual norm against outer iteration.}
  \label{fig:inexactsolves}
\end{figure}

\begin{table}[ht] 
	\caption{Observed local order of convergence on $\mathtt{garon2}$ as a function of correction equation accuracy (fixed GMRES inner iteration budget or exact solve).}
	\label{table:ordconv}
\centering
\begin{tabular}{lcccc}
\hline
 & GMRES(5) & GMRES(10) & GMRES(20) & Exact \\
\hline
JD  & 1.05 & 1.10 & 1.16 & 3.71 \\
RJD & 1.06 & 1.11 & 1.13 & 3.20 \\
\hline
\end{tabular}
\end{table}

\subsection{Multiple eigenvalues} \label{subsec:multeig}
In this experiment, we show that JD and RJD reach the same eigenpairs at comparable cost on large, nonsymmetric problems.\footnote{As the nonsymmetric eigenvalue problem is highly sensitive, for difficult problems JD and/or RJD may converge to the wrong eigenpair or not converge at all.}

The result is shown in \Cref{fig:multeig}. We consider the three test matrices \texttt{Goodwin\_040} ($n=17922$ with $561677$ nonzeros), \texttt{Goodwin\_071} ($n=56021$ with $1797934$ nonzeros) and  \texttt{M80PI\_n} ($n=4182$ with $10261$ nonzeros). For all three matrices, we target $k=6$ eigenvalues\footnote{We use hard locking~\cite[Ch.~4.3.4]{AlgEigProbBook} to lock the converged vectors, so we (sketch-)orthogonalize the basis against the locked vectors.} with a random Gaussian starting vector, the correction equation is solved with $5$ iterations of GMRES, restart dimensions are $m_{\min}=60$, $m_{\max}=120$, and the tolerance is $10^{-8}$.
For the matrix \texttt{Goodwin\_040} we target six interior eigenvalues near $\tau=10^{-3}(1.6+2.4i)$, a small-magnitude complex target well away from the extremal part of the spectrum, for \texttt{Goodwin\_071} we target six eigenvalues with the smallest magnitude, and for \texttt{M80PI\_n} we target the six eigenvalues with largest imaginary part. All experiments identify and converge to the correct target eigenvalues. The two methods require a comparable number of outer iterations on all three problems, with RJD requiring slightly more; since the number of matrix-vector products and the number of correction solves are the same per outer iteration, the costs of the two methods are comparable in those units. However, RJD has lower orthogonalization cost, which may offset a slightly higher iteration count.

\begin{figure}[!t]
    \centering
    \begin{minipage}[c]{0.48\linewidth}
        \centering
        \includegraphics[height=0.82\linewidth]{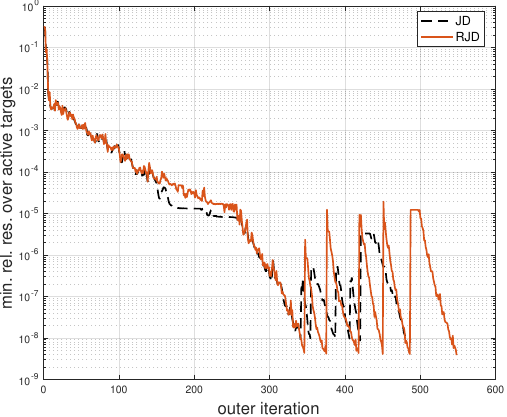}
    \end{minipage}
    \hfill
    \begin{minipage}[c]{0.48\linewidth}
        \centering
        \includegraphics[height=0.82\linewidth]{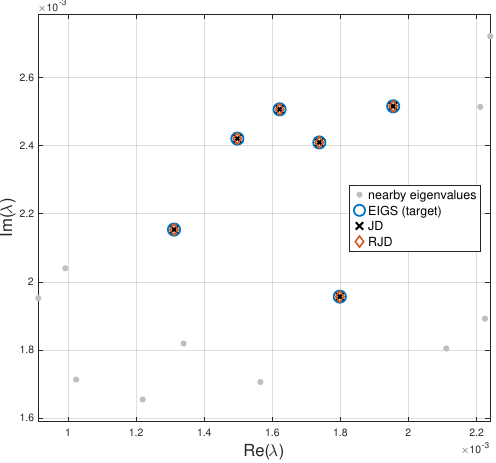}
    \end{minipage}


    \begin{minipage}[c]{0.48\linewidth}
        \centering
        \includegraphics[height=0.82\linewidth]{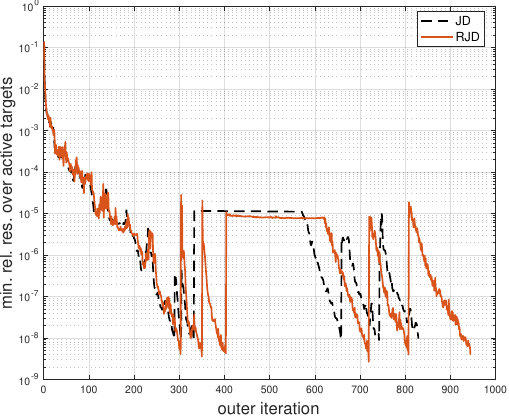}
    \end{minipage}
    \hfill
    \begin{minipage}[c]{0.48\linewidth}
        \centering
        \includegraphics[height=0.82\linewidth]{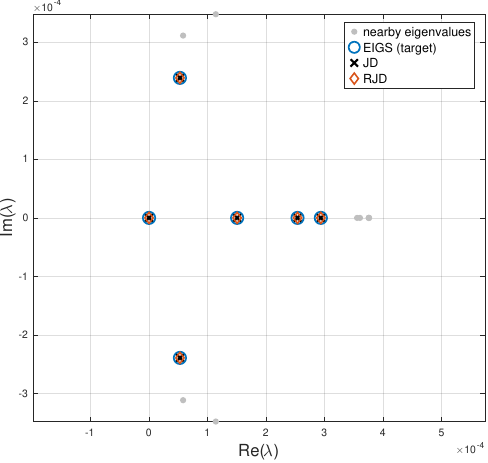}
    \end{minipage}

    \begin{minipage}[c]{0.48\linewidth}
        \centering
        \includegraphics[height=0.82\linewidth]{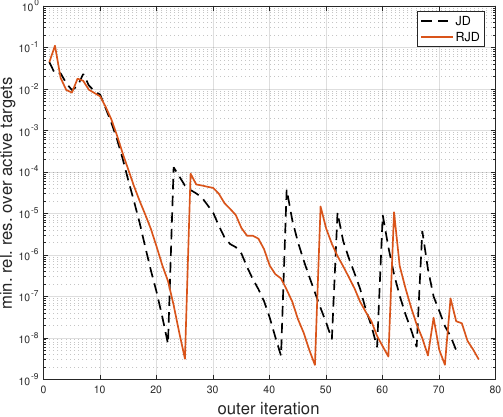}
    \end{minipage}
    \hfill
    \begin{minipage}[c]{0.48\linewidth}
        \centering
        \includegraphics[height=0.82\linewidth]{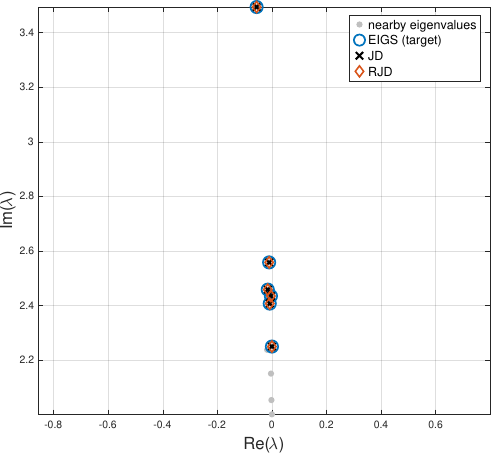}
    \end{minipage}

    \caption{Multiple-eigenvalue experiments on three real matrices. Top: $\mathtt{Goodwin\_040}$. Middle: $\mathtt{Goodwin\_071}$. Bottom: $\mathtt{M80PI\_n}$. Left column: residual against outer iteration; right column: computed eigenvalues in the complex plane against a reference computed by $\mathtt{eigs}$.}
    \label{fig:multeig}
\end{figure}

\subsection{Extraction strategies for interior eigenvalues} \label{subsec:interioreigtest}

\Cref{sec:harmonicrefined} extends harmonic and refined extraction, developed for interior targets beyond the exterior-target setting of \Cref{thm:convRJD}, to \Cref{alg:rjd}. Here we test whether that extension preserves JD's reliability, for each extraction strategy, rather than which strategy is best.

The test matrix is $A = X\diag(\vec\lambda)X^{-1}$ with $n=50$, $\kappa_2(X)=2000$, and $\vec\lambda$ consisting of $20$ eigenvalues evenly spaced in $[-15,15]$ and $30$ densely clustered in $[-0.5,0.5]$. The target is the $15$th of the clustered eigenvalues, and the numeric shift $\tau$ passed to JD/RJD is offset from its true value by $0.012$, so several near-equally-plausible Ritz values compete to be closest to $\tau$. We run JD and RJD from $1000$ independent random starting vectors, correction equation solved exactly, tolerance $10^{-10}$, restart dimensions $m_{\min}=10$, $m_{\max}=20$, for each of standard, refined, harmonic, and harmonic-refined extraction. \Cref{table:extraction} reports the fraction of runs converging to the correct target eigenvalue, and the median outer iteration count among successful runs.
\begin{table}[!ht] 
	\caption{JD vs.\ RJD success rates and median outer iteration counts for an interior target, using four extraction methods, on a synthetic non-normal matrix with a densely clustered spectrum. Results are based on $1000$ independent random starting vectors for each method.}
	\label{table:extraction}
\centering
\begin{tabular}{lcccc}
\toprule
Extraction & \multicolumn{2}{c}{JD} & \multicolumn{2}{c}{RJD} \\
\cmidrule(lr){2-3} \cmidrule(lr){4-5}
method & success & med.\ iter & success & med.\ iter \\
\midrule
standard & 72.2\% & 22 & 79.4\% & 23 \\
refined & 63.1\% & 26 & 72.0\% & 27 \\
harmonic & 71.4\% & 30 & 81.8\% & 29 \\
harmonic-refined & 68.0\% & 34 & 78.1\% & 33 \\
\bottomrule
\end{tabular}
\end{table}

RJD achieves a higher success rate than JD for all four extraction methods, with very similar median outer iteration counts. We do not interpret this as evidence that RJD is intrinsically more reliable; rather, \Cref{table:extraction} indicates that sketching does not degrade reliability across the extraction strategies considered.

Harmonic and harmonic-refined extraction do not consistently outperform standard or refined extraction here, despite their usual advantage for interior eigenvalues in Arnoldi/Lanczos-type methods. This may be because harmonic Ritz approximations are more sensitive when the search subspace is still small and poorly adapted, while the JD correction equation already provides a shift-targeted expansion mechanism. We leave a fuller explanation to future work.

\section{Conclusion} \label{sec:conclusions} 
We introduced the randomized Jacobi-Davidson method that replaces the orthonormal search basis with a sketch-orthonormal one, uses a sketched Galerkin condition for eigenpair extraction, and modifies the correction equation accordingly. We proved that the method retains local quadratic convergence for non-Hermitian eigenvalue problems under a uniform separation condition (\Cref{thm:convRJD}), at roughly half the orthogonalization cost of the classical method. We also extended harmonic and refined extraction for interior eigenvalues (\Cref{sec:harmonicrefined}). Our numerical experiments confirm the predicted order of convergence and show reliability comparable to classical Jacobi-Davidson across real and synthetic problems, including with inexact correction solves and different extraction strategies for interior eigenvalues.

Several directions remain open. A two-sided formulation, tracking both the right and left eigenvectors, may yield cubic local convergence as in classical two-sided Jacobi-Davidson method~\cite{HochstenbachSleijpen03}. The framework could also be extended to generalized and nonlinear eigenvalue problems, and to partial Schur, JDQR-type methods~\cite{Fokkema1998} with sketch-orthogonal deflation. On the practical side, convergence theory for inexact correction solves would better justify the small GMRES budgets used in practice (\Cref{subsec:inexactce}), while mixed-precision implementation could further reduce computational cost.

\section*{Declaration of AI usage in the manuscript preparation process}
During the preparation of this work, the authors used Claude Opus 4.8 while refining and verifying the results in \Cref{sec:harmonicrefined}, ChatGPT-5.6 Sol Pro to double check the proofs and provide general assistance with the writing and editing, and Claude Code to assist with code generation and debugging. After using these tools and services, the authors reviewed and edited the content as needed and take full responsibility for the content of the published article.

\bibliographystyle{siamplain}
\bibliography{refs}

\end{document}